\theoremstyle{plain}
\newtheorem{theorem}{Theorem}
\newtheorem{assertion}[theorem]{Assertion}
\newtheorem{proposition}[theorem]{Proposition}
\theoremstyle{definition}
\newtheorem{definition}[theorem]{Definition}
\theoremstyle{remark}
\newtheorem{remark}[theorem]{Remark}
\newtheorem{example}[theorem]{Example}
\numberwithin{equation}{section}
\numberwithin{theorem}{section}
\renewcommand{\mathit}{\bm}
\renewcommand{\mathtt}[1]{\scalebox{1.2}{\bf \texttt{\upshape#1}}}
\renewcommand{\emph}[1]{\textcolor{blue}{\textbf{#1}}}
\numberwithin{equation}{section}
\numberwithin{theorem}{section}
\def\oldbibitem{} \let\oldbibitem=\bibitem
\def\bibitem{\stepcounter{citnum}\oldbibitem}
\renewcommand*{\backref}[1]{}
\renewcommand*{\backrefalt}[4]{[{\tiny%
    \ifcase #1 \textsl{Not cited}%
          \or \textsl{Cited on page}~\textcolor{BrickRed}{#2}%
          \else \textsl{Cited on pages}~\textcolor{BrickRed}{#2}%
    \fi%
    }]}
\author{\small\scshape S\lowercase{teven} D\lowercase{uplij}}
\address{%
University of M\"unster,
D-48149 M\"unster,
Germany}
\email{\small \sf douplii@uni-muenster.de;
sduplij@gmail.com;
http://www.uni-muenster.de/IT.StepanDouplii}
\title{\large\bfseries\scshape
P\lowercase{olyadic sigma matrices}}
\date{\textit{of start} February 12, 2024. \textit{Date}:
\textit{of completion}
March 27, 2024.
\newline
\mbox{}\hskip 1.16em
\textit{Total}:
15
references
}
\renewcommand{\refname}{\textsc{References}}
\let\origsection\section
\renewcommand{\section}[1]{\sectionmark{#1}\origsection{#1}}
\let\origsubsection\subsection
\renewcommand{\subsection}[1]{\subsectionmark{#1}\origsubsection{#1}}
\renewenvironment{thebibliography}[1]{%
  \@xp\origsection\@xp*\@xp{\refname}%
  \normalfont\footnotesize\labelsep .9em\relax
  \renewcommand\theenumiv{\arabic{enumiv}}\let\p@enumiv\@empty
  \vspace*{-5pt}
  \list{\@biblabel{\theenumiv}}{\settowidth\labelwidth{\@biblabel{#1}}%
    \leftmargin\labelwidth \advance\leftmargin\labelsep
    \usecounter{enumiv}}%
  \sloppy \clubpenalty\@M \widowpenalty\clubpenalty
  \sfcode`\.=\@m
}{%
  \def\@noitemerr{\@latex@warning{Empty `thebibliography' environment}}%
  \endlist
}
\subjclass[2010]{20B05, 20H20, 20N10, 20N15, 20M10}
\keywords{sigma matrix, Pauli matrix, Pauli group, arity, $n$-ary semigroup, polyadic group, ternary group, $n$-ary group, querelement, finite semigroup, finite group, cyclic group, cyclic shift matrix, Hadamard product}
\begin{document}
\mbox{}
\vskip 2cm
\begin{abstract}

\noindent We generalize $\sigma$-matrices to higher arities using the
polyadization procedure proposed by the author. We build the nonderived
$n$-ary version of $SU\left(  2\right)  $ using cyclic shift block matrices.
We define a new function, the polyadic trace, which has an additivity property
analogous to the ordinary trace for block diagonal matrices and which can be
used to build the corresponding invariants. The elementary $\Sigma$-matrices
introduced here play a role similar to ordinary matrix units, and their sums
are full $\Sigma$-matrices which can be treated as a polyadic analog of
$\sigma$-matrices. The presentation of $n$-ary $SU\left(  2\right)  $ in terms
of full $\Sigma$-matrices is done using the Hadamard product. We then
generalize the Pauli group in two ways: for the binary case we introduce the
extended phase shifted $\sigma$-matrices with multipliers in cyclic groups of
order $4q$ ($q>4$), and for the polyadic case we construct the correspondent
finite $n$-ary semigroup of phase-shifted elementary $\Sigma$-matrices of
order $4q\left(  n-1\right)  +1$, and the finite $n$-ary group of
phase-shifted full $\Sigma$-matrices of order $4q$. Finally, we introduce the
finite $n$-ary group of heterogeneous full $\mathit{\Sigma}^{het}$-matrices of
order $\left(  4q\left(  n-1\right)  \right)  ^{4}$. Some examples of the
lowest arities are presented.

\end{abstract}

\maketitle

\thispagestyle{empty}
\mbox{}
\vspace{-1.4cm}
\tableofcontents
\newpage

\pagestyle{fancy}

\addtolength{\footskip}{15pt}

\renewcommand{\sectionmark}[1]{%
\markboth{
{ \scshape #1}}{}}

\renewcommand{\subsectionmark}[1]{%
\markright{
\mbox{\;}\\[5pt]
\textmd{#1}}{}}

\fancyhead{}
\fancyhead[EL,OR]{\leftmark}
\fancyhead[ER,OL]{\rightmark}
\fancyfoot[C]{\scshape -- \textcolor{BrickRed}{\thepage} --}

\renewcommand\headrulewidth{0.5pt}
\fancypagestyle {plain1}{ %
\fancyhf{}
\renewcommand {\headrulewidth }{0pt}
\renewcommand {\footrulewidth }{0pt}
}

\fancypagestyle{plain}{ %
\fancyhf{}
\fancyhead[C]{\scshape S\lowercase{teven} D\lowercase{uplij} \hskip 0.7cm \MakeUppercase{Polyadic Hopf algebras and quantum groups}}
\fancyfoot[C]{\scshape - \thepage  -}
\renewcommand {\headrulewidth }{0pt}
\renewcommand {\footrulewidth }{0pt}
}

\fancypagestyle{fancyref}{ %
\fancyhf{} 
\fancyhead[C]{\scshape R\lowercase{eferences} }
\fancyfoot[C]{\scshape -- \textcolor{BrickRed}{\thepage} --}
\renewcommand {\headrulewidth }{0.5pt}
\renewcommand {\footrulewidth }{0pt}
}

\fancypagestyle{emptyf}{
\fancyhead{}
\fancyfoot[C]{\scshape -- \textcolor{BrickRed}{\thepage} --}
\renewcommand{\headrulewidth}{0pt}
}
\thispagestyle{emptyf}

\section{\textsc{Introduction}}

The role of sigma matrices ($\sigma$-matrices, Pauli matrices) in mathematical
physics is hard to overestimate. For a review, see, e.g.
\cite{fey/lei/san,schiff,liboff}. Here we will generalize $\sigma$-matrices to
higher arities by using the polyadization procedure proposed in
\cite{duplij2022}.

First, we recall the connection between $\sigma$-matrices and $SU\left(
2\right)  $, and then construct an $n$-ary version of them using cyclic block
shift matrices along the lines of \cite{duplij2022}. We define a new function,
a polyadic trace, which has an additivity property analogous to the ordinary
trace for block diagonal matrices, and can be used to build the corresponding
invariants. The elementary $\Sigma$-matrices introduced here play a role
similar to ordinary matrix units, and their sums are full $\Sigma$-matrices
which can be treated as a polyadic analog of ordinary (binary) $\sigma
$-matrices. The presentation of $n$-ary $SU\left(  2\right)  $ in terms of
full $\Sigma$-matrices is done by means of the Hadamard product.

Second, we consider generalizations of the Pauli group (see, e.g.
\cite{nie/chu,kib2009}): for the binary case we introduce extended phase
shifted $\sigma$-matrices with multipliers in cyclic groups of order $4q$
($q>4$), for the polyadic case we construct the corresponding finite $n$-ary
semigroup of phase-shifted elementary $\Sigma$-matrices of order $4q\left(
n-1\right)  +1$, and a finite $n$-ary group of phase-shifted full $\Sigma
$-matrices of order $4q$.

Finally, we introduce the finite $n$-ary group of heterogeneous full
$\mathit{\Sigma}^{het}$-matrices of order $\left(  4q\left(  n-1\right)
\right)  ^{4}$, and some illustrative examples in the lowest arities are presented.

\section{\textsc{Preliminaries}}

First, to establish notation we briefly recall some well-known facts in
language convenient for further generalizations to higher arity. Consider the
special unitary group $SU\left(  2\right)  $ represented by $2\times2$ complex
($3$-parameter) matrices of the form%

\begin{align}
M  &  =M(x_{0},\overrightarrow{x})=\left(
\begin{array}
[c]{cc}%
x_{0}+ix_{1} & x_{2}+ix_{3}\\
-x_{2}+x_{3}i & x_{0}-x_{1}i
\end{array}
\right)  ,\label{m}\\
\det M  &  =x_{0}^{2}+\overrightarrow{x}^{2}=1,\ \ \ \ x_{0},x_{1},x_{2}%
,x_{3}\in\mathbb{R}, \label{m1}%
\end{align}
such that%
\begin{equation}
M^{\dagger}M=MM^{\dagger}=I_{2}, \label{u}%
\end{equation}
where $\left(  \dagger\right)  $ is the Hermitian (conjugate) transpose (here
$M^{\dagger}=M^{-1}$), and $\overrightarrow{x}=\left(  x_{1},x_{2}%
,x_{3}\right)  $.

The binary product of the matrices (\ref{m}) is closed and in terms of the
parameters is%
\begin{align}
&  M(x_{0}^{\prime},\overrightarrow{x}^{\prime})M(x_{0}^{\prime\prime
},\overrightarrow{x}^{\prime\prime})=M(x_{0},\overrightarrow{x}),\\
&  x_{0}^{\prime2}+\overrightarrow{x}^{\prime2}=x_{0}^{\prime\prime
2}+\overrightarrow{x}^{\prime\prime2}=1,\ \ \ \ x_{0}^{\prime},x_{1}^{\prime
},x_{2}^{\prime},x_{3}^{\prime},x_{0}^{\prime\prime},x_{1}^{\prime\prime
},x_{2}^{\prime\prime},x_{3}^{\prime\prime}\in\mathbb{R},
\end{align}
where%
\begin{align}
x_{0}  &  =x_{0}^{\prime}x_{0}^{\prime\prime}-x_{1}^{\prime}x_{1}%
^{\prime\prime}-x_{2}^{\prime}x_{2}^{\prime\prime}-x_{3}^{\prime}x_{3}%
^{\prime\prime},\ \ \ x_{1}=x_{1}^{\prime}x_{0}^{\prime\prime}+x_{0}^{\prime
}x_{1}^{\prime\prime}+x_{2}^{\prime}x_{3}^{\prime\prime}-x_{3}^{\prime}%
x_{2}^{\prime\prime},\label{a}\\
x_{2}  &  =x_{2}^{\prime}x_{0}^{\prime\prime}+x_{0}^{\prime}x_{2}%
^{\prime\prime}+x_{3}^{\prime}x_{1}^{\prime\prime}-x_{1}^{\prime}x_{3}%
^{\prime\prime},\ \ \ x_{3}=x_{3}^{\prime}x_{0}^{\prime\prime}+x_{0}^{\prime
}x_{3}^{\prime\prime}+x_{1}^{\prime}x_{2}^{\prime\prime}-x_{2}^{\prime}%
x_{1}^{\prime\prime}. \label{d}%
\end{align}

The standard (binary) $\sigma$-matrices (Pauli matrices)%
\begin{equation}
\sigma_{0}=\left(
\begin{array}
[c]{cc}%
1 & 0\\
0 & 1
\end{array}
\right)  \equiv I_{2}\equiv\mathsf{1},\ \ \sigma_{1}=\left(
\begin{array}
[c]{cc}%
0 & 1\\
1 & 0
\end{array}
\right)  ,\ \ \sigma_{2}=\left(
\begin{array}
[c]{cc}%
0 & -i\\
i & 0
\end{array}
\right)  ,\ \ \sigma_{3}=\left(
\begin{array}
[c]{cc}%
1 & 0\\
0 & -1
\end{array}
\right)  \label{si}%
\end{equation}
are involutory (reflections)%
\begin{equation}
\sigma_{j}^{2}=I_{2},\ \ \ \ \ \ \sigma_{j}^{-1}=\sigma_{j,}\label{in}%
\end{equation}
and they satisfy the (binary) commutation and anti-commutation relations%
\begin{align}
\left[  \sigma_{j},\sigma_{k}\right]   &  =\left[  \sigma_{j},\sigma
_{k}\right]  ^{\left[  \mathbf{2}\right]  }=\sigma_{j}\sigma_{k}-\sigma
_{k}\sigma_{j}=2i\epsilon_{jkl}\sigma_{l},\ \ \ j,k,l=1,2,3,\label{c2}\\[1pt]
\left\{  \sigma_{j},\sigma_{k}\right\}   &  =\left\{  \sigma_{j},\sigma
_{k}\right\}  ^{\left[  \mathbf{2}\right]  }=\sigma_{j}\sigma_{k}+\sigma
_{k}\sigma_{j}=2\delta_{jk}I_{2},\label{c2a}%
\end{align}
where $\epsilon_{jkl}$ is the Levi-Civita (permutation) symbol in three
dimensions, while $\left[  \ \ ,\ \ \right]  ^{\left[  \mathbf{2}\right]  }$
and $\left\{  \ \ ,\ \ \right\}  ^{\left[  \mathbf{2}\right]  }$ are the
ordinary (binary) commutator and anti-commutator (here we denote arity by bold
numbers in square brackets).

In terms of $\sigma$-matrices the group element of $SU\left(  2\right)  $ has
the following presentation (see (\ref{m}))%
\begin{equation}
M=M(x_{0},\overrightarrow{x})=x_{0}\sigma_{0}+i\overrightarrow{x}%
\overrightarrow{\sigma}. \label{ms}%
\end{equation}

The binary $SU\left(  2\right)  $ invariant $I_{gen}$ can be obtained using
the ordinary trace in the standard way%
\begin{equation}
I_{gen}=\frac{1}{2}\operatorname*{tr}\left(  M^{\dagger}(x_{0}^{\prime
},\overrightarrow{x}^{\prime})M(x_{0}^{\prime\prime},\overrightarrow
{x}^{\prime\prime})\right)  =x_{0}^{\prime}x_{0}^{\prime\prime}%
+\overrightarrow{x}^{\prime}\overrightarrow{x}^{\prime\prime}. \label{i2}%
\end{equation}
$\allowbreak$

The invariant in the form (\ref{i2}) will allow us to obtain its analog for
higher arities.

\section{\textsc{Polyadic} $SU\left(  2\right)  $}

Let us apply the matrix polyadization procedure to $SU\left(  2\right)  $
(\ref{m}) following \cite{duplij2022}. Consider the set of $\left(
n-1\right)  $ complex $2\times2$ matrices
\begin{align}
M^{\left(  \mathbf{k}\right)  }  &  =M(x_{0}^{\left(  \mathbf{k}\right)
},\overrightarrow{x}^{\left(  \mathbf{k}\right)  })=\left(
\begin{array}
[c]{cc}%
x_{0}^{\left(  \mathbf{k}\right)  }+ix_{3}^{\left(  \mathbf{k}\right)  } &
ix_{1}^{\left(  \mathbf{k}\right)  }+x_{2}^{\left(  \mathbf{k}\right)  }\\
ix_{1}^{\left(  \mathbf{k}\right)  }-x_{2}^{\left(  \mathbf{k}\right)  } &
x_{0}^{\left(  \mathbf{k}\right)  }-ix_{3}^{\left(  \mathbf{k}\right)  }%
\end{array}
\right)  =x_{0}^{\left(  \mathbf{k}\right)  }\sigma_{0}+i\overrightarrow
{x}^{\left(  \mathbf{k}\right)  }\overrightarrow{\sigma},\label{mk}\\
M^{\left(  \mathbf{k}\right)  \dagger}M^{\left(  \mathbf{k}\right)  }  &
=\mathsf{1},\ \ \ \ \det M^{\left(  \mathbf{k}\right)  }=\left(
x_{0}^{\left(  \mathbf{k}\right)  }\right)  ^{2}+\left(  \overrightarrow
{x}^{\left(  \mathbf{k}\right)  }\right)  ^{2}=1,\ \ k=1,\ldots n-1,
\label{mk1}%
\end{align}
such that each one represents the group $SU\left(  2\right)  $ (see
(\ref{m})). For clarity, we denote the $2\times2$ identity and zero matrices
by $\mathsf{1}$ and $\mathsf{0}$.

\begin{definition}
The polyadic ($n$-ary) special linear $3\left(  n-1\right)  $ parameter group
$SU^{\left[  \mathbf{n}\right]  }\left(  2\right)  $ is isomorphic to the set
of $2\left(  n-1\right)  \times2\left(  n-1\right)  $ cyclic shift block
matrices of the form \cite{duplij2022}%
\begin{equation}
\mathbf{M}=\mathbf{M}_{2\left(  n-1\right)  \times2\left(  n-1\right)
}=\left(
\begin{array}
[c]{ccccc}%
\mathsf{0} & M^{\left(  \mathbf{1}\right)  } & \ldots & \mathsf{0} &
\mathsf{0}\\
\mathsf{0} & \mathsf{0} & M^{\left(  \mathbf{2}\right)  } & \ldots &
\mathsf{0}\\
\mathsf{0} & \mathsf{0} & \ddots & \ddots & \vdots\\
\vdots & \vdots & \ddots & \mathsf{0} & M^{\left(  \mathbf{n-2}\right)  }\\
M^{\left(  \mathbf{n-1}\right)  } & \mathsf{0} & \ldots & \mathsf{0} &
\mathsf{0}%
\end{array}
\right)  , \label{mn}%
\end{equation}
with respect to the $n$-ary multiplication (as the ordinary $2\left(
n-1\right)  \times2\left(  n-1\right)  $-matrix product $\left(  \cdot\right)
$)%
\begin{equation}
\mathit{\mu}^{\left[  \mathbf{n}\right]  }\left[  \overset{n}{\overbrace
{\mathbf{M}^{\prime},\mathbf{M}^{\prime\prime},\ldots,\mathbf{M}^{\prime
\prime\prime}}}\right]  =\overset{n}{\overbrace{\mathbf{M}^{\prime}%
\cdot\mathbf{M}^{\prime\prime}\ldots\cdot\mathbf{M}^{\prime\prime\prime}}%
}=\mathbf{M}, \label{mnm}%
\end{equation}
which is non-derived (i.e. not closed for fewer than $n$ multipliers). The
allowed number of $\mathbf{M}$'s in any polyadic product is $\ell\left(
n-1\right)  +1$, $\ell\in\mathbb{N}$.
\end{definition}

Sometimes, we omit $\mathit{\mu}^{\left[  \mathbf{n}\right]  }$, if the arity
is clear from the context.

In terms of $SU\left(  2\right)  $ blocks $M^{\left(  \mathbf{k}\right)  }$
(\ref{mn}) the product (\ref{mnm}) is given by the $\left(  n-1\right)  $
cycled products%
\begin{align}
\overset{n}{\overbrace{M^{\left(  \mathbf{1}\right)  \prime}M^{\left(
\mathbf{2}\right)  \prime\prime}\ldots M^{\left(  \mathbf{n-1}\right)
\prime\prime\prime}M^{\left(  \mathbf{1}\right)  \prime\prime\prime\prime}}}
&  =M^{\left(  \mathbf{1}\right)  },\label{mm1}\\
\overset{n}{\overbrace{M^{\left(  \mathbf{2}\right)  \prime}M^{\left(
\mathbf{3}\right)  \prime\prime}\ldots M^{\left(  \mathbf{1}\right)
\prime\prime\prime}M^{\left(  \mathbf{2}\right)  \prime\prime\prime\prime}}}
&  =M^{\left(  \mathbf{2}\right)  },\\
&  \vdots\\
\overset{n}{\overbrace{M^{\left(  \mathbf{n-1}\right)  \prime}M^{\left(
\mathbf{1}\right)  \prime\prime}\ldots M^{\left(  \mathbf{n-2}\right)
\prime\prime\prime}M^{\left(  \mathbf{n-1}\right)  \prime\prime\prime\prime}%
}}  &  =M^{\left(  \mathbf{n-1}\right)  }. \label{mmn}%
\end{align}

The $n$-ary analog of the binary inverse is the querelement $\widetilde
{\mathbf{M}}$ \cite{dor3} defined by%
\begin{equation}
\mathit{\mu}^{\left[  \mathbf{n}\right]  }\left[  \overset{n}{\overbrace
{\mathbf{M},\mathbf{M},\ldots,\widetilde{\mathbf{M}}}}\right]  =\mathbf{M,}
\label{mq}%
\end{equation}
where $\widetilde{\mathbf{M}}$ can be on any place. The manifest form of
$\widetilde{\mathbf{M}}$ can be obtained from (\ref{mm1})--(\ref{mq}) as
(taking into account that $M^{-1}=M^{\dagger}$, $M\in SU\left(  2\right)
$){\tiny
\begin{equation}
\widetilde{\mathbf{M}}=\left(
\begin{array}
[c]{ccccc}%
\mathsf{0} & M^{\left(  \mathbf{n-1}\right)  \dagger}\ldots M^{\left(
\mathbf{3}\right)  \dagger}M^{\left(  \mathbf{2}\right)  \dagger} & \ldots &
\mathsf{0} & \mathsf{0}\\
\mathsf{0} & \mathsf{0} & M^{\left(  \mathbf{1}\right)  \dagger}\ldots
M^{\left(  \mathbf{4}\right)  \dagger}M^{\left(  \mathbf{3}\right)  \dagger} &
\ldots & \mathsf{0}\\
\mathsf{0} & \mathsf{0} & \ddots & \ddots & \vdots\\
\vdots & \vdots & \ddots & \mathsf{0} & M^{\left(  \mathbf{n-3}\right)
\dagger}\ldots M^{\left(  \mathbf{1}\right)  \dagger}M^{\left(  \mathbf{n-1}%
\right)  \dagger}\\
M^{\left(  \mathbf{n-2}\right)  \dagger}\ldots M^{\left(  \mathbf{2}\right)
\dagger}M^{\left(  \mathbf{1}\right)  \dagger} & \mathsf{0} & \ldots &
\mathsf{0} & \mathsf{0}%
\end{array}
\right)  . \label{mw}%
\end{equation}
}

Thus, we have

\begin{definition}
The set of matrices (\ref{mn}) together with the $n$-ary product (\ref{mnm})
and querelement (\ref{mw}) defines a polyadic ($n$-ary) special unitary group%
\begin{equation}
SU^{\left[  \mathbf{n}\right]  }\left(  2\right)  =\left\langle \left\{
\mathbf{M}\right\}  \mid\mathit{\mu}^{\left[  \mathbf{n}\right]  }%
,\widetilde{\left(  \ \ \right)  }\right\rangle .
\end{equation}

\end{definition}

The existence of the polyadic identity in an $n$-ary group is not necessary,
as only the querelement is important in this context \cite{dor3}. For the
$n$-ary group $SU^{\left[  \mathbf{n}\right]  }\left(  2\right)  $ the
situation is the opposite: the polyadic identity is not unique. Indeed, recall
that the left $\mathbf{E}_{l}$, middle $\mathbf{E}_{m}$ (on $n-2$ places) and
right $\mathbf{E}_{r}$ polyadic identities are defined by%
\begin{align}
\mathit{\mu}^{\left[  \mathbf{n}\right]  }\left[  \overset{n}{\overbrace
{\mathbf{E}_{l},\mathbf{M},\ldots,\mathbf{M}}}\right]   &  =\mathbf{M}%
,\label{el}\\
\mathit{\mu}^{\left[  \mathbf{n}\right]  }\left[  \overset{n}{\overbrace
{\mathbf{M},\mathbf{E}_{m},\ldots,\mathbf{M}}}\right]   &  =\mathbf{M}%
,\label{em}\\
\mathit{\mu}^{\left[  \mathbf{n}\right]  }\left[  \overset{n}{\overbrace
{\mathbf{M},\ldots,\mathbf{M},\mathbf{E}_{r}}}\right]   &  =\mathbf{M}%
,\ \ \ \forall\mathbf{M}\in SU^{\left[  \mathbf{n}\right]  }\left(  2\right)
, \label{er}%
\end{align}
respectively.

\begin{assertion}
The $n$-ary group $SU^{\left[  \mathbf{n}\right]  }\left(  2\right)  $ has
sets of left and right identities of the form%
\begin{align}
\mathbf{E}_{l}\left(  a\right)   &  =\left(
\begin{array}
[c]{ccccc}%
\mathsf{0} & a^{\left(  \mathbf{1}\right)  }I_{2} & \ldots & \mathsf{0} &
\mathsf{0}\\
\mathsf{0} & \mathsf{0} & a^{\left(  \mathbf{2}\right)  }I_{2} & \ldots &
\mathsf{0}\\
\mathsf{0} & \mathsf{0} & \ddots & \ddots & \vdots\\
\vdots & \vdots & \ddots & \mathsf{0} & a^{\left(  \mathbf{n-2}\right)  }%
I_{2}\\
a^{\left(  \mathbf{n-1}\right)  }I_{2} & \mathsf{0} & \ldots & \mathsf{0} &
\mathsf{0}%
\end{array}
\right)  ,\label{el1}\\
a^{\left(  \mathbf{1}\right)  }a^{\left(  \mathbf{2}\right)  }\ldots
a^{\left(  \mathbf{n-1}\right)  }  &  =1,\ a^{\left(  \mathbf{k}\right)  }%
\in\mathbb{R}\setminus\left\{  0\right\}  ,\label{el2}\\
\mathbf{E}_{r}\left(  b\right)   &  =\left(
\begin{array}
[c]{ccccc}%
\mathsf{0} & b^{\left(  \mathbf{1}\right)  }I_{2} & \ldots & \mathsf{0} &
\mathsf{0}\\
\mathsf{0} & \mathsf{0} & b^{\left(  \mathbf{2}\right)  }I_{2} & \ldots &
\mathsf{0}\\
\mathsf{0} & \mathsf{0} & \ddots & \ddots & \vdots\\
\vdots & \vdots & \ddots & \mathsf{0} & b^{\left(  \mathbf{n-2}\right)  }%
I_{2}\\
b^{\left(  \mathbf{n-1}\right)  }I_{2} & \mathsf{0} & \ldots & \mathsf{0} &
\mathsf{0}%
\end{array}
\right)  ,\label{er1}\\
b^{\left(  \mathbf{1}\right)  }b^{\left(  \mathbf{2}\right)  }\ldots
b^{\left(  \mathbf{n-1}\right)  }  &  =1,\ b^{\left(  \mathbf{k}\right)  }%
\in\mathbb{R}\setminus\left\{  0\right\}  , \label{er2}%
\end{align}
where both $a^{\left(  \mathbf{k}\right)  }$ and $b^{\left(  \mathbf{k}%
\right)  }$, $k=1,\ldots,n-1$, are different sets of $\left(  n-2\right)  $
non-zero reals (satisfying the additional conditions (\ref{el2}) and
(\ref{er2})), and there are no middle identities at all. They form two
different $\left(  n-2\right)  $ parameter subgroups of $SU^{\left[
\mathbf{n}\right]  }\left(  2\right)  $.
\end{assertion}

\begin{remark}
In the case when all $a^{\left(  \mathbf{k}\right)  }=b^{\left(
\mathbf{k}\right)  }=1$, both identities coincide%
\begin{equation}
\mathbf{E}_{l}\left(  1\right)  =\mathbf{E}_{r}\left(  1\right)
=\mathbf{E}^{\left[  \mathbf{n}\right]  }=\left(
\begin{array}
[c]{ccccc}%
\mathsf{0} & I_{2} & \ldots & \mathsf{0} & \mathsf{0}\\
\mathsf{0} & \mathsf{0} & I_{2} & \ldots & \mathsf{0}\\
\mathsf{0} & \mathsf{0} & \ddots & \ddots & \vdots\\
\vdots & \vdots & \ddots & \mathsf{0} & I_{2}\\
I_{2} & \mathsf{0} & \ldots & \mathsf{0} & \mathsf{0}%
\end{array}
\right)  , \label{ee}%
\end{equation}
and when $\mathbf{E}$ is in one of the middle places in (\ref{em}), it
permutes the internal blocks of (\ref{mn}), and therefore $\mathbf{E}$ is not
a middle polyadic identity. Nevertheless, if all the blocks are equal
$M^{\left(  \mathbf{k}\right)  }=M$, then $\mathbf{E}$ is also a middle
identity, or simply the polyadic identity of $SU^{\left[  \mathbf{n}\right]
}\left(  2\right)  $.
\end{remark}

For the cyclic shift block matrices of the form (\ref{mn}), the determinant is%
\begin{equation}
\det\mathbf{M}=\left(  -1\right)  ^{n-1}\det M^{\left(  \mathbf{1}\right)
}\ldots\det M^{\left(  \mathbf{n-1}\right)  }, \label{dm}%
\end{equation}
which for $M^{\left(  \mathbf{k}\right)  }\in SU\left(  2\right)  $ becomes
$\det\mathbf{M}=\left(  -1\right)  ^{n-1}$.

Consider a $m\left(  n-1\right)  \times m\left(  n-1\right)  $ block-diagonal
matrix $\mathbf{A}=\mathrm{diag}\left(  A_{1}\ldots A_{n-1}\right)  $, where
$A_{k}$ are $\left(  n-1\right)  $ matrices of the size $m\times m$. The
connection between trace and determinant of any matrix $\mathbf{A}$ is%
\begin{equation}
\det e^{\mathbf{A}}=e^{\operatorname*{tr}\mathbf{A}}, \label{da}%
\end{equation}
which in case of the block-diagonal matrix becomes%
\begin{equation}
\det e^{\mathbf{A}}=e^{\operatorname*{tr}A_{1}}\ldots e^{\operatorname*{tr}%
A_{n-1}}=e^{\operatorname*{tr}A_{1}+\ldots+\operatorname*{tr}A_{n-1}},
\label{de}%
\end{equation}
and for the block-diagonal matrix $\mathbf{A}$ we have%
\begin{align}
\det\mathbf{A}  &  =\det A_{1}\ldots\det A_{n-1}\label{da1}\\
\operatorname*{tr}\mathbf{A}  &  =\operatorname*{tr}A_{1}+\ldots
+\operatorname*{tr}A_{n-1} \label{ta1}%
\end{align}

Note that the ordinary trace of the cyclic shift block matrix (\ref{mn}) is
identically zero $\operatorname*{tr}\mathbf{M}=0$. Comparing (\ref{da1}) with
(\ref{dm}) we observe that both cyclic shift block matrix $\mathbf{M}$ and the
block-diagonal matrix $\mathbf{A}$ are monomial block matrices. By analogy
with (\ref{ta1}), we propose a new function

\begin{definition}
A polyadic (or $n$-ary) trace $\mathbf{Trp}^{\left[  n\right]  }:SU^{\left[
n\right]  }\left(  2\right)  \rightarrow\mathbb{R}$ is defined for the
$2\left(  n-1\right)  \times2\left(  n-1\right)  $ cyclic shift block matrix
(\ref{mn}) by%
\begin{equation}
\mathbf{Trp}^{\left[  n\right]  }\mathbf{M}=\operatorname*{tr}M^{\left(
\mathbf{1}\right)  }+\ldots+\operatorname*{tr}M^{\left(  \mathbf{n-1}\right)
}. \label{tn}%
\end{equation}

\end{definition}

\begin{remark}
In this case we see that the invariance properties of the polyadic trace
remain the same as for the ordinary trace for a block-diagonal matrix. This is
true for any monomial block matrix, for which we can use (\ref{ta1}),
(\ref{tn}).
\end{remark}

\begin{remark}
In the binary case ($n=2$) we have $\mathbf{M}=M^{\left(  \mathbf{1}\right)
}$, and the polyadic trace coincides with the ordinary trace%
\begin{equation}
\mathbf{Trp}^{\left[  2\right]  }\mathbf{M}=\operatorname*{tr}M^{\left(
\mathbf{1}\right)  }.
\end{equation}

\end{remark}

\begin{example}
The polyadic identities (\ref{el1})--(\ref{er1}) are traceless
$\operatorname*{tr}\mathbf{E}_{l}\left(  a\right)  =\operatorname*{tr}%
\mathbf{E}_{r}\left(  b\right)  =0$, while their polyadic traces are different%
\begin{equation}
\mathbf{Trp}^{\left[  n\right]  }\mathbf{E}_{l}\left(  a\right)  =2\left(
a^{\left(  \mathbf{1}\right)  }+\ldots+a^{\left(  \mathbf{n-1}\right)
}\right)  ,\ \ \ \ \ \ \mathbf{Trp}^{\left[  n\right]  }\mathbf{E}_{r}\left(
b\right)  =2\left(  b^{\left(  \mathbf{1}\right)  }+\ldots+b^{\left(
\mathbf{n-1}\right)  }\right)  .
\end{equation}

\end{example}

\section{\textsc{Polyadic analog of} \textsc{sigma matrices}}

Next we present the $SU^{\left[  n\right]  }\left(  2\right)  $ matrices
(\ref{mn}) in a form similar to (\ref{ms}) using (\ref{mk}). We insert
(\ref{mk}) into (\ref{mn}) and get{\tiny
\begin{equation}
\mathbf{M}=\left(
\begin{array}
[c]{ccccc}%
\mathsf{0} & x_{0}^{\left(  \mathbf{1}\right)  }\sigma_{0}+i\overrightarrow
{x}^{\left(  \mathbf{1}\right)  }\overrightarrow{\sigma} & \ldots & \mathsf{0}
& \mathsf{0}\\
\mathsf{0} & \mathsf{0} & x_{0}^{\left(  \mathbf{2}\right)  }\sigma
_{0}+i\overrightarrow{x}^{\left(  \mathbf{2}\right)  }\overrightarrow{\sigma}
& \ldots & \mathsf{0}\\
\mathsf{0} & \mathsf{0} & \ddots & \ddots & \vdots\\
\vdots & \vdots & \ddots & \mathsf{0} & x_{0}^{\left(  \mathbf{n-2}\right)
}\sigma_{0}+i\overrightarrow{x}^{\left(  \mathbf{n-2}\right)  }\overrightarrow
{\sigma}\\
x_{0}^{\left(  \mathbf{n-1}\right)  }\sigma_{0}+i\overrightarrow{x}^{\left(
\mathbf{n-2}\right)  }\overrightarrow{\sigma} & \mathsf{0} & \ldots &
\mathsf{0} & \mathsf{0}%
\end{array}
\right)  . \label{mx}%
\end{equation}
}

The ordinary trace of $\mathbf{M}$ is obviously zero, while the polyadic trace
(\ref{tn}) is%
\begin{equation}
\mathbf{Trp}^{\left[  n\right]  }\mathbf{M}=2\left(  x_{0}^{\left(
\mathbf{1}\right)  }+\ldots+x_{0}^{\left(  \mathbf{n-1}\right)  }\right)  .
\end{equation}

\subsection{Elementary $\Sigma$-matrices}

To obtain an analog of the decomposition (\ref{ms}) for $\mathbf{M}$
(\ref{mn}), (\ref{mx}), we should use cyclic shift block matrices constructed
from $\sigma$-matrices.

\begin{definition}
The $4\left(  n-1\right)  $ polyadic elementary $\Sigma$-matrices of the size
$2\left(  n-1\right)  \times2\left(  n-1\right)  $ are defined as the cyclic
shift block matrices (\ref{mn}) containing only one non-zero $2\times2$ block,
in which a corresponding $\sigma$-matrix is placed%
\begin{equation}
\Sigma_{j}^{\left(  \mathbf{k}\right)  }=\left(
\begin{array}
[c]{ccccccc}%
\mathsf{0} & \fbox{\textsf{0\ \ \ }$^{\left(  \mathbf{1}\right)  }$} &
\mathsf{0} & \ldots & \mathsf{0} & \ldots & \mathsf{0}\\
\mathsf{0} & \mathsf{0} & \fbox{\textsf{0\ \ \ }$^{\left(  \mathbf{2}\right)
}$} & \ldots & \mathsf{0} & \ldots & \mathsf{0}\\
\mathsf{0} & \mathsf{0} & \mathsf{0} & \ddots & \vdots & \ldots & \vdots\\
\mathsf{0} & \mathsf{0} & \mathsf{0} & \ldots & \fbox{$\sigma_{j}%
$\ \ $^{\left(  \mathbf{k}\right)  }$} & \ldots & \mathsf{0}\\
\vdots & \vdots & \vdots & \vdots & \vdots & \ddots & \vdots\\
\mathsf{0} & \mathsf{0} & \mathsf{0} & \ldots & \mathsf{0} & \ldots &
\fbox{\textsf{0\ \ \ }$^{\left(  \mathbf{n-2}\right)  }$}\\
\fbox{\textsf{0\ \ \ }$^{\left(  \mathbf{n-1}\right)  }$} & \mathsf{0} &
\mathsf{0} & \ldots & \mathsf{0} & \ldots & \mathsf{0}%
\end{array}
\right)  , \label{sk}%
\end{equation}
where $\sigma_{j}$ is in $k$th block on the diagonal.
\end{definition}

\begin{theorem}
The $SU^{\left[  n\right]  }\left(  2\right)  $ matrix can be represented as
an expansion in the polyadic elementary $\Sigma$-matrices by%
\begin{align}
\mathbf{M}=x_{0}^{\left(  \mathbf{1}\right)  }\Sigma_{0}^{\left(
\mathbf{1}\right)  }+  &  i\overrightarrow{x}^{\left(  \mathbf{1}\right)
}\overrightarrow{\Sigma}^{\left(  \mathbf{1}\right)  }+x_{0}^{\left(
\mathbf{2}\right)  }\Sigma_{0}^{\left(  \mathbf{2}\right)  }+i\overrightarrow
{x}^{\left(  \mathbf{2}\right)  }\overrightarrow{\Sigma}^{\left(
\mathbf{2}\right)  }+\ldots+x_{0}^{\left(  \mathbf{n-1}\right)  }\Sigma
_{0}^{\left(  \mathbf{n-1}\right)  }+i\overrightarrow{x}^{\left(
\mathbf{n-1}\right)  }\overrightarrow{\Sigma}^{\left(  \mathbf{n-1}\right)
},\label{mxs}\\
&  \overrightarrow{\Sigma}^{\left(  \mathbf{k}\right)  }=\left(  \Sigma
_{1}^{\left(  \mathbf{k}\right)  },\Sigma_{2}^{\left(  \mathbf{k}\right)
},\Sigma_{3}^{\left(  \mathbf{k}\right)  }\right)  ,\ \ \left(  x_{0}^{\left(
\mathbf{k}\right)  }\right)  ^{2}+\left(  \overrightarrow{x}^{\left(
\mathbf{k}\right)  }\right)  ^{2}=1,\ \ k=1,\ldots n-1. \label{mx1}%
\end{align}

\end{theorem}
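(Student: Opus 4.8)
The plan is to prove the expansion (\ref{mxs}) by a direct block-by-block comparison, exploiting the fact that the elementary $\Sigma$-matrices indexed by distinct $k$ occupy disjoint block slots. First I would fix an index $k\in\{1,\ldots,n-1\}$ and observe that the four matrices $\Sigma_0^{(\mathbf{k})},\Sigma_1^{(\mathbf{k})},\Sigma_2^{(\mathbf{k})},\Sigma_3^{(\mathbf{k})}$ defined in (\ref{sk}) all have their single non-zero $2\times2$ block in exactly the same position, namely the $k$th slot of the cyclic shift pattern, and differ only in which $\sigma$-matrix $\sigma_j$ sits there.

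By linearity of matrix addition, the partial combination $x_0^{(\mathbf{k})}\Sigma_0^{(\mathbf{k})}+i\overrightarrow{x}^{(\mathbf{k})}\overrightarrow{\Sigma}^{(\mathbf{k})}$ is therefore again a cyclic shift block matrix of the form (\ref{sk}) whose unique non-zero block equals $x_0^{(\mathbf{k})}\sigma_0+i\overrightarrow{x}^{(\mathbf{k})}\overrightarrow{\sigma}$. By the $\sigma$-matrix decomposition in (\ref{mk}), this block is precisely $M^{(\mathbf{k})}$, so each summand indexed by $k$ reproduces one off-diagonal block of $\mathbf{M}$ from (\ref{mn}), placed in the correct slot.

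Finally I would sum over $k=1,\ldots,n-1$. Because the slot occupied by $\Sigma_j^{(\mathbf{k})}$ shifts with $k$ along the cyclic diagonal, the supports of the summands for distinct $k$ are pairwise disjoint, so no two terms overlap. The sum thus assembles exactly the blocks $M^{(\mathbf{1})},\ldots,M^{(\mathbf{n-1})}$ into their respective positions, recovering $\mathbf{M}$ as displayed in (\ref{mn}) and (\ref{mx}). The normalization $\left(x_0^{(\mathbf{k})}\right)^2+\left(\overrightarrow{x}^{(\mathbf{k})}\right)^2=1$ asserted in (\ref{mx1}) is not an independent claim: it is inherited verbatim from $\det M^{(\mathbf{k})}=1$ in (\ref{mk1}).

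The argument is essentially mechanical, and the only point requiring care — hence the main obstacle, such as it is — is the bookkeeping of block indices: confirming that the slot in which (\ref{sk}) places $\sigma_j$ coincides with the slot in which (\ref{mn}) places $M^{(\mathbf{k})}$, and that these slots are genuinely disjoint for different $k$. Once the indexing convention is pinned down, there is no analytic content beyond the linearity of the block decomposition.
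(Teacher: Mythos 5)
Your argument is correct and is essentially the paper's own proof spelled out in detail: the paper likewise derives (\ref{mxs}) from the explicit form (\ref{mx}), the definition (\ref{sk}), and linearity of scalar multiplication, only stating this in one sentence where you make the block-slot bookkeeping explicit. The added care about disjoint supports and the observation that (\ref{mx1}) is inherited from (\ref{mk1}) are faithful elaborations, not a different route.
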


\begin{proof}
The presentation (\ref{mxs}) follows from the exact form (\ref{mx}), the
definition of the $\Sigma$-matrix (\ref{sk}), linearity and properties of the
scalar multiplication.
\end{proof}

\begin{proposition}
Each elementary $\Sigma$-matrix (\ref{sk}) is non-invertible, but the sum
(\ref{mxs}) gives an invertible $SU^{\left[  n\right]  }\left(  2\right)  $ matrix.
\end{proposition}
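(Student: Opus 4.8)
The plan is to prove both halves of the statement through the single tool already available, namely the block-determinant formula (\ref{dm}), which expresses the determinant of any cyclic shift block matrix of the form (\ref{mn}) as a signed product of the determinants of its $2\times2$ blocks. Applying it twice, once to an elementary $\Sigma_{j}^{(\mathbf{k})}$ and once to the sum (\ref{mxs}), handles the two assertions in parallel and at the same time dissolves the apparent paradox that a sum of singular matrices is nonsingular: each $\Sigma_{j}^{(\mathbf{k})}$ is singular precisely because it leaves all block positions empty except one, whereas the summation in (\ref{mxs}) repopulates every block slot.

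For the first assertion I would note that $\Sigma_{j}^{(\mathbf{k})}$ of (\ref{sk}) is itself a cyclic shift block matrix of the shape (\ref{mn}), whose blocks are $\sigma_{j}$ in the $k$th slot and the zero block $\mathsf{0}$ in the remaining $n-2$ slots. In the genuinely polyadic range $n\geq3$ at least one block equals $\mathsf{0}$, so the right-hand side of (\ref{dm}) contains the factor $\det\mathsf{0}=0$; hence $\det\Sigma_{j}^{(\mathbf{k})}=0$ and $\Sigma_{j}^{(\mathbf{k})}$ is non-invertible. More structurally, all nonzero entries of $\Sigma_{j}^{(\mathbf{k})}$ lie inside one $2\times2$ block, so $\operatorname{rank}\Sigma_{j}^{(\mathbf{k})}=\operatorname{rank}\sigma_{j}=2<2(n-1)$, which forces singularity. (The binary case $n=2$ is degenerate, since then $\Sigma_{j}^{(\mathbf{1})}=\sigma_{j}$ has no empty block left to kill the determinant and is invertible.)

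For the second assertion the Theorem preceding this Proposition already identifies the sum (\ref{mxs}) with the matrix $\mathbf{M}$ of (\ref{mn}), (\ref{mx}), whose $k$th block is $M^{(\mathbf{k})}=x_{0}^{(\mathbf{k})}\sigma_{0}+i\overrightarrow{x}^{(\mathbf{k})}\overrightarrow{\sigma}$. Under the constraint $(x_{0}^{(\mathbf{k})})^{2}+(\overrightarrow{x}^{(\mathbf{k})})^{2}=1$ of (\ref{mx1}) each block lies in $SU(2)$, so $\det M^{(\mathbf{k})}=1$ and (\ref{dm}) yields $\det\mathbf{M}=(-1)^{n-1}\neq0$. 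Thus $\mathbf{M}$ is invertible as an ordinary $2(n-1)\times2(n-1)$ matrix, and as an element of the $n$-ary group $SU^{\left[\mathbf{n}\right]}\left(2\right)$ the role of the inverse is played by the querelement $\widetilde{\mathbf{M}}$ of (\ref{mw}), so the sum is an invertible $SU^{\left[\mathbf{n}\right]}\left(2\right)$ matrix as claimed.

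I expect no serious obstacle. The only points demanding a word of care are that (\ref{dm}) is being applied to a block matrix with some zero blocks, which is legitimate because that formula is a block-permutation (Laplace) expansion and never presupposes invertibility of the individual blocks, and the explicit exclusion of the degenerate binary case $n=2$. The genuinely conceptual content, that singular summands add up to a nonsingular matrix, is accounted for entirely by the disjointness of the block positions occupied by the various $\Sigma_{j}^{(\mathbf{k})}$, so that their sum has every diagonal-cycle slot filled by an $SU(2)$ block.
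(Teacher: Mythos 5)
Your proof is correct and follows essentially the same route as the paper: both halves rest on the block-determinant formula (\ref{dm}), with the zero block forcing $\det\Sigma_{j}^{(\mathbf{k})}=0$ and the sphere condition (\ref{mx1}) guaranteeing each block of the sum lies in $SU(2)$ so that $\det\mathbf{M}=(-1)^{n-1}\neq0$. Your added rank argument and the explicit remark about the degenerate case $n=2$ are harmless refinements of the same idea.
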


\begin{proof}
If one block in the cycled block shift matrix has zero determinant, the whole
matrix becomes non-invertible, because of the product of determinants
(\ref{dm}). Following the sphere $S_{3}$ condition (\ref{mx1}) for each block
$\left(  x_{0}^{\left(  \mathbf{k}\right)  }\right)  ^{2}+\left(
x_{1}^{\left(  \mathbf{k}\right)  }\right)  ^{2}+\left(  x_{2}^{\left(
\mathbf{k}\right)  }\right)  ^{2}+\left(  x_{3}^{\left(  \mathbf{k}\right)
}\right)  ^{2}=1$, at least one $x_{i}^{\left(  \mathbf{k}\right)  }\neq0$,
and so each block in (\ref{mx}) is invertible ($M^{\left(  \mathbf{k}\right)
}\in SU\left(  2\right)  $), then the whole matrix $\mathbf{M}$ is invertible.
\end{proof}

Another form of the polyadic elementary $\Sigma$-matrices can be given in
terms of a product of a vector-column and a vector-row. Indeed, introduce two
$\left(  n-1\right)  \times1$ vectors%
\begin{equation}
\mathbf{V}^{\left(  \mathbf{k}\right)  }=\left(  \left\vert
\begin{array}
[c]{c}%
\mathsf{0}\\
\vdots\\
\fbox{$I_{2}$\textsf{\ \ \ }$^{\left(  \mathbf{k}\right)  }$}\\
\vdots\\
\mathsf{0}%
\end{array}
\right\}  \left(  n-1\right)  \right)  ,\ \ \ \mathbf{S}_{j}^{\left(
\mathbf{k}\right)  }=\left(  \left\vert
\begin{array}
[c]{c}%
\mathsf{0}\\
\vdots\\
\fbox{$\sigma_{j}$\textsf{\ \ \ }$^{\left(  \mathbf{k}\right)  }$}\\
\vdots\\
\mathsf{0}%
\end{array}
\right\}  \left(  n-1\right)  \right)  ,\ \ \ \ j=0,1,2,3, \label{vs1}%
\end{equation}
then we obtain matrices with one nonzero block (block matrix units)%
\begin{equation}
\Sigma_{j}^{\left(  \mathbf{k}\right)  }=\mathbf{V}^{\left(  \mathbf{k}%
\right)  }\cdot\left(  \mathbf{S}_{j}^{\left(  \mathbf{k+1}\right)  }\right)
^{T},\ \ k=1,\ldots,n-2,\ \ \ \ \ \Sigma_{j}^{\left(  \mathbf{n-1}\right)
}=\mathbf{V}^{\left(  \mathbf{n-1}\right)  }\cdot\left(  \mathbf{S}%
_{j}^{\left(  \mathbf{1}\right)  }\right)  ^{T}, \label{sv}%
\end{equation}
which coincide with (\ref{sk}).

\begin{remark}
As opposed to the binary case and $\sigma$-matrices, the elementary $\Sigma
$-matrices (\ref{sk}) do not belong to $SU^{\left[  n\right]  }\left(
2\right)  $, because they are non-invertible, nilpotent and $\det\Sigma
_{j}^{\left(  \mathbf{k}\right)  }=0$.
\end{remark}

\subsection{Full $\mathit{\Sigma}$-matrices}

Note that from (\ref{ee}) and (\ref{sk}) it follows that the sum of elementary
$\Sigma$-matrices%
\begin{equation}
\Sigma_{0}^{\left(  \mathbf{1}\right)  }+\Sigma_{0}^{\left(  \mathbf{2}%
\right)  }+\ldots+\Sigma_{0}^{\left(  \mathbf{n-1}\right)  }=\mathbf{E\in
}SU^{\left[  n\right]  }\left(  2\right)  ,\label{se}%
\end{equation}
is invertible, because $\det\mathbf{E}=1$, and therefore, the sum is in
$SU^{\left[  n\right]  }\left(  2\right)  $. This allows us to make

\begin{definition}
The full $\Sigma$-matrices $\mathit{\Sigma}_{j}$ are defined by the sums of
elementary $\Sigma$-matrices%
\begin{equation}
\Sigma_{j}^{\left(  \mathbf{1}\right)  }+\Sigma_{j}^{\left(  \mathbf{2}%
\right)  }+\ldots+\Sigma_{j}^{\left(  \mathbf{n-1}\right)  }=\mathit{\Sigma
}_{j},\ \ j=0,1,2,3. \label{ssj}%
\end{equation}

\end{definition}

In manifest form the full $\mathit{\Sigma}$-matrices are%
\begin{equation}
\mathit{\Sigma}_{j}=\left(
\begin{array}
[c]{ccccccc}%
\mathsf{0} & \fbox{$\sigma_{j}$\textsf{\ \ \ }$^{\left(  \mathbf{1}\right)  }%
$} & \mathsf{0} & \ldots & \mathsf{0} & \ldots & \mathsf{0}\\
\mathsf{0} & \mathsf{0} & \fbox{$\sigma_{j}$\textsf{\ \ \ }$^{\left(
\mathbf{2}\right)  }$} & \ldots & \mathsf{0} & \ldots & \mathsf{0}\\
\mathsf{0} & \mathsf{0} & \mathsf{0} & \ddots & \vdots & \ldots & \vdots\\
\mathsf{0} & \mathsf{0} & \mathsf{0} & \ldots & \fbox{$\sigma_{j}%
$\ \ $^{\left(  \mathbf{k}\right)  }$} & \ldots & \mathsf{0}\\
\vdots & \vdots & \vdots & \vdots & \vdots & \ddots & \vdots\\
\mathsf{0} & \mathsf{0} & \mathsf{0} & \ldots & \mathsf{0} & \ldots &
\fbox{$\sigma_{j}$\textsf{\ \ \ }$^{\left(  \mathbf{n-2}\right)  }$}\\
\fbox{$\sigma_{j}$\textsf{\ \ \ }$^{\left(  \mathbf{n-1}\right)  }$} &
\mathsf{0} & \mathsf{0} & \ldots & \mathsf{0} & \ldots & \mathsf{0}%
\end{array}
\right)  ,\ \ \ \ \ j=0,1,2,3. \label{sj}%
\end{equation}

In this notation $\mathit{\Sigma}_{0}=\mathbf{E}$ from (\ref{ee}).

\begin{proposition}
The full $\mathit{\Sigma}$-matrices are in the polyadic group $SU^{\left[
n\right]  }\left(  2\right)  $.
\end{proposition}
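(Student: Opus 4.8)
The plan is to verify directly that each full $\mathit{\Sigma}$-matrix meets the two defining requirements for membership in $SU^{\left[n\right]}\left(2\right)$: it must be a cyclic shift block matrix of the form (\ref{mn}), and it must be invertible with a querelement of the same form, so that it participates in the $n$-ary product (\ref{mnm}) and the querelement operation. The case $j=0$ is immediate, since $\mathit{\Sigma}_0=\mathbf{E}$ by (\ref{ee}), and $\mathbf{E}\in SU^{\left[n\right]}\left(2\right)$ was already established in (\ref{se}); so the substance lies in the cases $j=1,2,3$.

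First I would observe that, by its manifest form (\ref{sj}), every $\mathit{\Sigma}_j$ already has the cyclic shift block pattern of (\ref{mn}), with all $n-1$ nonzero blocks equal to the single matrix $\sigma_j$. Thus the only things left to check are invertibility and the shape of the querelement. Invertibility follows from the product-of-determinants formula (\ref{dm}): since $\det\sigma_j=-1$ for $j=1,2,3$, we obtain $\det\mathit{\Sigma}_j=(-1)^{n-1}(\det\sigma_j)^{n-1}=(-1)^{n-1}(-1)^{n-1}=1\neq0$, so $\mathit{\Sigma}_j$ is nonsingular. Unitarity of the whole matrix is inherited blockwise from $\sigma_j^{\dagger}\sigma_j=I_2$, which holds by the involutivity (\ref{in}).

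The key step is to exhibit the querelement and confirm that it remains inside the family of full $\mathit{\Sigma}$-matrices. Applying the general formula (\ref{mw}) with every block equal to $\sigma_j$, each nonzero entry of $\widetilde{\mathbf{M}}$ becomes a product of $n-2$ copies of $\sigma_j^{\dagger}=\sigma_j$, that is $\sigma_j^{n-2}$. Using $\sigma_j^2=I_2$ from (\ref{in}), this collapses to $\mathsf{1}$ when $n$ is even and to $\sigma_j$ when $n$ is odd; hence the querelement of $\mathit{\Sigma}_j$ is either $\mathbf{E}=\mathit{\Sigma}_0$ or $\mathit{\Sigma}_j$ itself. In both cases it is again a full $\mathit{\Sigma}$-matrix, so the defining relation (\ref{mq}) is satisfied within the family, and therefore $\mathit{\Sigma}_j\in SU^{\left[n\right]}\left(2\right)$.

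The main obstacle is the apparent mismatch with the special-unitarity normalization: taken individually, the blocks $\sigma_j$ for $j=1,2,3$ have determinant $-1$ and so are not themselves $SU\left(2\right)$ matrices, so one cannot conclude membership block by block as for an ordinary $\mathbf{M}$. The point I would stress is that it is the polyadic determinant over all $n-1$ blocks, together with the cyclic-shift sign recorded in (\ref{dm}), that restores $\det\mathit{\Sigma}_j=1$; the condition (\ref{mk1}) must then be read for the full matrix rather than for the separate blocks, and the querelement computation above is precisely what certifies that the full $\mathit{\Sigma}$-matrices close up under the $n$-ary operations.
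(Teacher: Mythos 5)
Your argument is correct in substance but follows a genuinely different route from the paper's. The paper's proof is a one-line specialization: it asserts that the full $\mathit{\Sigma}$-matrices arise from the parametrized form (\ref{mx}) by setting all $x_{j}^{\left(\mathbf{k}\right)}=1$, so that membership is inherited from the general element $\mathbf{M}$. You instead verify the defining properties directly: the cyclic shift block shape from (\ref{sj}), invertibility via the determinant formula (\ref{dm}) (computing $\det\mathit{\Sigma}_{j}=\left(-1\right)^{n-1}\left(\det\sigma_{j}\right)^{n-1}=1$ for $j=1,2,3$), blockwise unitarity from $\sigma_{j}^{2}=I_{2}$, and closure under the querelement operation, which you correctly identify as $\mathit{\Sigma}_{0}$ for even $n$ and $\mathit{\Sigma}_{j}$ for odd $n$ --- in agreement with the formula (\ref{sq2}) that the paper only establishes two sections later. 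Your route buys more: the querelement computation anticipates that later result, and your explicit acknowledgement of the determinant mismatch ($\det\sigma_{j}=-1$, so the individual blocks are not in $SU\left(2\right)$) exposes a real tension that the paper's own proof glosses over; indeed, taken literally, setting all $x_{j}^{\left(\mathbf{k}\right)}=1$ in (\ref{mx}) violates the normalization $\left(x_{0}^{\left(\mathbf{k}\right)}\right)^{2}+\left(\overrightarrow{x}^{\left(\mathbf{k}\right)}\right)^{2}=1$, and even the single-parameter choice $x_{j}^{\left(\mathbf{k}\right)}=1$ with the rest zero yields blocks $i\sigma_{j}$ rather than $\sigma_{j}$. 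The one place where your proof, like the paper's, remains informal is the final step: the definition (\ref{mn}) requires each block to lie in $SU\left(2\right)$, and declaring that the condition (\ref{mk1}) ``must be read for the full matrix'' amounts to enlarging the group rather than proving membership in it as literally defined. Since that caveat applies equally to the source, it is not a defect of your approach relative to the paper, but it is worth stating openly that the proposition holds only under this relaxed reading.
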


\begin{proof}
This follows directly from (\ref{mx}) with all $x_{j}^{\left(  \mathbf{k}%
\right)  }=1$, $j=0,1,2,3$, $k=1,\ldots,n-1$.
\end{proof}

Using the full $\mathit{\Sigma}$-matrices we could derive the restricted
version of (\ref{mxs}) as (cf. the binary case (\ref{ms}))%
\begin{align}
&  \overline{\mathbf{M}}=x_{0}\mathit{\Sigma}_{0}+i\overrightarrow
{x}\overrightarrow{\mathit{\Sigma}},\ \ \ \ \overrightarrow{\mathit{\Sigma}%
}=\left(  \mathit{\Sigma}_{1},\mathit{\Sigma}_{2},\mathit{\Sigma}_{3}\right)
,\label{mb}\\
&  x_{j}=x_{j}^{\left(  \mathbf{1}\right)  }=x_{j}^{\left(  \mathbf{2}\right)
}=\ldots=x_{j}^{\left(  \mathbf{n-1}\right)  },\ \ j=0,1,2,3,\label{xx}\\
&  x_{0}^{2}+\overrightarrow{x}^{2}=1. \label{xx1}%
\end{align}

\begin{proposition}
The polyadic matrices (\ref{mb}) form a subgroup $\mathbf{SU}^{\left[
n\right]  }\left(  2\right)  $ of $SU^{\left[  n\right]  }\left(  2\right)  $.
\end{proposition}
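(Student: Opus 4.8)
The plan is to identify the matrices (\ref{mb}) concretely and then verify the two properties that characterize an $n$-ary subgroup: closure under the product $\mu^{\left[\mathbf{n}\right]}$ and closure under taking the querelement. First I would observe that, by the constraint (\ref{xx}), the matrix $\overline{\mathbf{M}}$ is precisely the cyclic shift block matrix (\ref{mn}) in which every block coincides with a single $M=M(x_0,\overrightarrow{x})\in SU\left(2\right)$; write $\overline{\mathbf{M}}(M)$ for this matrix. The inclusion $\overline{\mathbf{M}}(M)\in SU^{\left[n\right]}\left(2\right)$ is then immediate from the sphere condition (\ref{xx1}), which guarantees $M\in SU\left(2\right)$, so the set under consideration is a subset of the big group.

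For closure under $\mu^{\left[\mathbf{n}\right]}$, I would feed $n$ such matrices $\overline{\mathbf{M}}(M_1),\ldots,\overline{\mathbf{M}}(M_n)$ into the cycled product formulas (\ref{mm1})--(\ref{mmn}). Because all blocks within each factor are equal, every one of the $\left(n-1\right)$ cycled products collapses to the same ordinary binary product $M_1 M_2\cdots M_n$. Hence the result is again a block matrix with all blocks equal, namely $\overline{\mathbf{M}}(M_1 M_2\cdots M_n)$, and since $SU\left(2\right)$ is closed under the binary product, the common block $M_1\cdots M_n$ lies in $SU\left(2\right)$.

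For the querelement, I would specialize the general expression (\ref{mw}) to the case of equal blocks. Each nonzero block there is a product of $n-2$ Hermitian conjugates, which for $M^{\left(\mathbf{k}\right)}=M$ reduces to $\left(M^{\dagger}\right)^{n-2}=\left(M^{n-2}\right)^{-1}\in SU\left(2\right)$; equivalently, the defining relation (\ref{mq}) reads $M^{n-1}\widetilde{M}=M$ inside $SU\left(2\right)$, forcing $\widetilde{M}=\left(M^{\dagger}\right)^{n-2}$. Thus the querelement of $\overline{\mathbf{M}}(M)$ equals $\overline{\mathbf{M}}\bigl(\left(M^{\dagger}\right)^{n-2}\bigr)$, which again has all blocks equal to an $SU\left(2\right)$ element and so stays in the set.

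The argument is essentially bookkeeping with the block structure; the one point that must be checked with care is that all $\left(n-1\right)$ cycled products in (\ref{mm1})--(\ref{mmn}) genuinely coincide, which is what promotes ``each block lies in $SU\left(2\right)$'' to ``the product again has the single-block form.'' Conceptually, the map $M\mapsto\overline{\mathbf{M}}(M)$ intertwines the binary product on $SU\left(2\right)$ with $\mu^{\left[\mathbf{n}\right]}$, exhibiting $\mathbf{SU}^{\left[n\right]}\left(2\right)$ as the $n$-ary structure derived from the binary group $SU\left(2\right)$ by iteration; this is the cleanest way to see that the two closure checks meet no genuine obstacle.
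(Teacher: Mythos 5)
Your argument is correct and follows essentially the same route as the paper: identify the matrices (\ref{mb}) as the cyclic shift block matrices (\ref{mn}) with all blocks equal to a single $M\in SU\left(2\right)$, and observe that the $\left(n-1\right)$ component multiplication rules (\ref{mm1})--(\ref{mmn}) then collapse to the single relation $M_{1}M_{2}\cdots M_{n}\in SU\left(2\right)$, which gives closure. Your explicit verification that the querelement specializes to $\overline{\mathbf{M}}\bigl(\left(M^{\dagger}\right)^{n-2}\bigr)$ is a welcome addition that the paper leaves implicit, but it does not change the underlying approach.
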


\begin{proof}
The restricted matrices $\overline{\mathbf{M}}$ (\ref{mb}) correspond to the
cyclic shift block matrices (\ref{mn}) having equal entries. From the
component multiplication rules (\ref{mm1})--(\ref{mmn}), where only one
equation remains now, it follows that the set $\left\{  \overline{\mathbf{M}%
}\right\}  $ is a subgroup of $SU^{\left[  n\right]  }\left(  2\right)  $.
\end{proof}

In terms of the full $\mathit{\Sigma}$-matrices the $n$-ary multiplication in
the subgroup $\mathbf{SU}^{\left[  n\right]  }\left(  2\right)  $ is given by
the single relation (instead of (\ref{mm1})--(\ref{mmn}))%
\begin{equation}
\overset{n}{\overbrace{\left(  x_{0}^{\prime}\mathit{\Sigma}_{0}%
+i\overrightarrow{x^{\prime}}\overrightarrow{\mathit{\Sigma}}\right)  \left(
x_{0}^{\prime\prime}\mathit{\Sigma}_{0}+i\overrightarrow{x^{\prime\prime}%
}\overrightarrow{\mathit{\Sigma}}\right)  \ldots\left(  x_{0}^{\prime
\prime\prime}\mathit{\Sigma}_{0}+i\overrightarrow{x^{\prime\prime\prime}%
}\overrightarrow{\mathit{\Sigma}}\right)  }}=x_{0}\mathit{\Sigma}%
_{0}+i\overrightarrow{x}\overrightarrow{\mathit{\Sigma}}. \label{xs0}%
\end{equation}

To obtain the multiplication in the subgroup, the Cayley table for the full
$\mathit{\Sigma}$-matrices is needed. Some obvious $n$-ary products (products
of other lower arities are not closed) are the following%
\begin{align}
\left(  \mathit{\Sigma}_{0}\right)  ^{n}  &  =\mathit{\Sigma}_{0}%
,\ \ \ \ \ \ \left(  \mathit{\Sigma}_{0}\right)  ^{n-1}\overrightarrow
{\mathit{\Sigma}}=\overrightarrow{\mathit{\Sigma}},\label{sn1}\\
\left(  \mathit{\Sigma}_{j}\right)  ^{n}  &  =\left\{
\begin{array}
[c]{c}%
\mathit{\Sigma}_{0},\ \ \ \text{if }n\text{ is even,}\\
\mathit{\Sigma}_{j},\ \ \ \text{if }n\text{ is odd.}%
\end{array}
\right.  ,\ \ j=1,2,3. \label{sn2}%
\end{align}
Therefore we have

\begin{assertion}
For odd arity all full $\mathit{\Sigma}$-matrices are $n$-ary idempotents,
while for even arity $\mathit{\Sigma}_{j}$ are $n$-ary reflections, because
$\mathit{\Sigma}_{0}\equiv\mathbf{E}$.
\end{assertion}

Then to find the $n$-ary multiplication (\ref{xs0}) in parameters $x_{j}$, we
need the expressions of $\sigma$-matrices up to $n$, because of the exact form
(\ref{sj}) of full $\mathit{\Sigma}$-matrices.

\begin{example}
For the case $n=4$ we have quaternary full $\mathit{\Sigma}$-matrices%
\begin{equation}
\mathit{\Sigma}_{0}=\left(
\begin{array}
[c]{ccc}%
\mathsf{0} & I_{2} & \mathsf{0}\\
\mathsf{0} & \mathsf{0} & I_{2}\\
I_{2} & \mathsf{0} & \mathsf{0}%
\end{array}
\right)  ,\ \ \ \ \ \ \ \mathit{\Sigma}_{j}=\left(
\begin{array}
[c]{ccc}%
\mathsf{0} & \sigma_{j} & \mathsf{0}\\
\mathsf{0} & \mathsf{0} & \sigma_{j}\\
\sigma_{j} & \mathsf{0} & \mathsf{0}%
\end{array}
\right)  ,\ \ j=1,2,3, \label{s4}%
\end{equation}
such that only the product of $4$ of them is closed (and of polyadic power
$\ell$, having $3\ell+1$ multipliers), and $\mathit{\Sigma}_{j}$ are $4$-ary
reflections, since $\left(  \mathit{\Sigma}_{j}\right)  ^{4}=\mathbf{E}%
^{\left[  \mathbf{4}\right]  }=\mathbf{E}$ (see (\ref{ee})).
\end{example}

We now try to construct the full polyadic group $SU^{\left[  n\right]
}\left(  2\right)  $ using the Hadamard product ($\odot$) (or element-wise
product, Schur product). Introduce the $2\left(  n-1\right)  \times2\left(
n-1\right)  $ cyclic shift block matrix of parameters of $SU^{\left[
n\right]  }\left(  2\right)  $ (see (\ref{mk}))%
\begin{align}
\mathbf{X}_{j}=  &  \left(
\begin{array}
[c]{ccccccc}%
\mathsf{0} & x_{j}^{\left(  \mathbf{1}\right)  }I_{2} & \mathsf{0} & \ldots &
\mathsf{0} & \ldots & \mathsf{0}\\
\mathsf{0} & \mathsf{0} & x_{j}^{\left(  \mathbf{2}\right)  }I_{2} & \ldots &
\mathsf{0} & \ldots & \mathsf{0}\\
\mathsf{0} & \mathsf{0} & \mathsf{0} & \ddots & \vdots & \ldots & \vdots\\
\mathsf{0} & \mathsf{0} & \mathsf{0} & \ldots & x_{j}^{\left(  \mathbf{k}%
\right)  }I_{2} & \ldots & \mathsf{0}\\
\vdots & \vdots & \vdots & \vdots & \vdots & \ddots & \vdots\\
\mathsf{0} & \mathsf{0} & \mathsf{0} & \ldots & \mathsf{0} & \ldots &
x_{j}^{\left(  \mathbf{n-2}\right)  }I_{2}\\
x_{j}^{\left(  \mathbf{n-1}\right)  }I_{2} & \mathsf{0} & \mathsf{0} & \ldots
& \mathsf{0} & \ldots & \mathsf{0}%
\end{array}
\right)  ,\ \ j=0,1,2,3.\label{xxi}\\[10pt]
&  \left(  x_{0}^{\left(  \mathbf{k}\right)  }\right)  ^{2}+\left(
\overrightarrow{x}^{\left(  \mathbf{k}\right)  }\right)  ^{2}=1,\ \ k=1,\ldots
n-1.
\end{align}

Then, instead of (\ref{mxs}) and (\ref{mb}) we have

\begin{theorem}
The full polyadic group $SU^{\left[  n\right]  }\left(  2\right)  $ matrix
(\ref{mx}) can be represented as an expansion in the polyadic full
$\mathit{\Sigma}$-matrices by%
\begin{equation}
\mathbf{M}=\mathbf{X}_{0}\odot\mathit{\Sigma}_{0}+i\mathbf{X}_{1}%
\odot\mathit{\Sigma}_{1}+i\mathbf{X}_{2}\odot\mathit{\Sigma}_{2}%
+i\mathbf{X}_{3}\odot\mathit{\Sigma}_{3}, \label{mx0}%
\end{equation}
where $\left(  \odot\right)  $ is the Hadamard product.
\end{theorem}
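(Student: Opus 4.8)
The plan is to reduce the matrix identity (\ref{mx0}) to a block-by-block computation, exploiting the fact that the Hadamard product preserves the sparsity pattern of block matrices. First I would observe that both $\mathbf{X}_{j}$ in (\ref{xxi}) and $\mathit{\Sigma}_{j}$ in (\ref{sj}) are cyclic shift block matrices whose only nonzero $2\times2$ blocks sit in exactly the same positions --- the superdiagonal slots $\left(k,k+1\right)$ for $k=1,\ldots,n-2$ together with the corner slot $\left(n-1,1\right)$. Since the Hadamard product acts entry-wise, $\mathbf{X}_{j}\odot\mathit{\Sigma}_{j}$ is again supported on precisely those positions, and its $k$-th nonzero block is the entry-wise product of the corresponding $2\times2$ blocks of $\mathbf{X}_{j}$ and $\mathit{\Sigma}_{j}$. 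Thus the whole right-hand side of (\ref{mx0}) inherits the common cyclic shift skeleton, and it suffices to match the four sides block by block.

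Next I would compute these block-level products. For each fixed $k$ and each $j=0,1,2,3$, the entry-wise product of the parameter block of $\mathbf{X}_{j}$ with $\sigma_{j}$ reproduces $x_{j}^{\left(\mathbf{k}\right)}\sigma_{j}$; summing the four contributions with the prescribed factors of $i$ on $j=1,2,3$ yields, in the $k$-th block slot,
\begin{equation}
x_{0}^{\left(\mathbf{k}\right)}\sigma_{0}+i\overrightarrow{x}^{\left(\mathbf{k}\right)}\overrightarrow{\sigma}=M^{\left(\mathbf{k}\right)},
\end{equation}
which is exactly the block appearing in (\ref{mk}). Carrying out this assembly for all $k$ places the blocks $M^{\left(\mathbf{1}\right)},\ldots,M^{\left(\mathbf{n-1}\right)}$ into the cyclic shift arrangement of (\ref{mx}), so the right-hand side of (\ref{mx0}) coincides with $\mathbf{M}$, completing the argument.

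The step requiring the most care --- and the only one that is not pure bookkeeping --- is the block-level Hadamard identity that must restore $x_{j}^{\left(\mathbf{k}\right)}\sigma_{j}$. For the diagonal generators $\sigma_{0}$ and $\sigma_{3}$ this is immediate, but for the off-diagonal $\sigma_{1},\sigma_{2}$ the entry-wise product must not annihilate the off-diagonal entries; this forces the parameter block of $\mathbf{X}_{j}$ to carry the scalar $x_{j}^{\left(\mathbf{k}\right)}$ in \emph{every} one of its four entries, so that $\odot$ with $\sigma_{j}$ leaves the support of $\sigma_{j}$ intact and merely rescales it. I expect this to be the crux: once that convention for the parameter blocks is fixed, the remaining verification is routine, reducing to the single identity (\ref{mk}) applied in each block together with the shared cyclic shift structure established in the first step.
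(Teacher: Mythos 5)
Your argument follows the same route as the paper's own one-line proof --- a direct block-by-block verification from (\ref{mx}), (\ref{sj}) and (\ref{xxi}) --- but you have fleshed it out and, more importantly, isolated the one point where the literal statement breaks down. As you note in your final paragraph, the parameter matrix $\mathbf{X}_{j}$ is defined in (\ref{xxi}) with nonzero blocks $x_{j}^{\left(\mathbf{k}\right)}I_{2}$, i.e.\ scalar multiples of the \emph{identity}. Taken literally, the entry-wise product of such a block with $\sigma_{1}$ or $\sigma_{2}$ is the zero matrix, since
\begin{equation}
\left(
\begin{array}{cc}
x & 0\\
0 & x
\end{array}
\right)  \odot\left(
\begin{array}{cc}
0 & 1\\
1 & 0
\end{array}
\right)  =\left(
\begin{array}{cc}
0 & 0\\
0 & 0
\end{array}
\right)  ,
\end{equation}
so that $\mathbf{X}_{1}\odot\mathit{\Sigma}_{1}=\mathbf{X}_{2}\odot\mathit{\Sigma}_{2}=\mathbf{0}$ and the right-hand side of (\ref{mx0}) would reproduce only the $\sigma_{0}$ and $\sigma_{3}$ parts of each block of (\ref{mx}). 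Your resolution --- that each nonzero block of $\mathbf{X}_{j}$ must be $x_{j}^{\left(\mathbf{k}\right)}$ times the all-ones $2\times2$ matrix, so that the Hadamard product with $\sigma_{j}$ merely rescales $\sigma_{j}$ without destroying its support --- is exactly what is needed; with that convention your sparsity-pattern observation and the block-level identity $x_{0}^{\left(\mathbf{k}\right)}\sigma_{0}+i\overrightarrow{x}^{\left(\mathbf{k}\right)}\overrightarrow{\sigma}=M^{\left(\mathbf{k}\right)}$ from (\ref{mk}) complete the verification in both directions. The paper's proof silently assumes this reading of (\ref{xxi}); making the correction explicit, as you do, is an improvement rather than a deviation, and the rest of your assembly is correct and complete.
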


\begin{proof}
This follows from (\ref{mx}), (\ref{sj}) and (\ref{xxi}), in both directions.
\end{proof}

Note that (\ref{mx0}) can be treated as a polyadic analog of the
\textquotedblleft multiplication by scalars\textquotedblright\ (some kind of
module) which is consistent with the cyclic shift block matrix structure.

\section{\textsc{Ternary }$SU(2)$\textsc{ and }$\mathit{\Sigma}$%
\textsc{-matrices}}

Let us consider the smallest arity case $n=3$. In manifest form an element of
the $6$-parameter matrix ternary group $SU^{\left[  \mathbf{3}\right]
}\left(  2\right)  $ is (cf. (\ref{m}))%
\begin{align}
\mathbf{M}=  &  \left(
\begin{array}
[c]{cccc}%
0 & 0 & x_{0}^{\left(  \mathbf{1}\right)  }+ix_{3}^{\left(  \mathbf{1}\right)
} & ix_{1}^{\left(  \mathbf{1}\right)  }+x_{2}^{\left(  \mathbf{1}\right)  }\\
0 & 0 & ix_{1}^{\left(  \mathbf{1}\right)  }-x_{2}^{\left(  \mathbf{1}\right)
} & x_{0}^{\left(  \mathbf{1}\right)  }-ix_{3}^{\left(  \mathbf{1}\right)  }\\
x_{0}^{\left(  \mathbf{2}\right)  }+ix_{3}^{\left(  \mathbf{2}\right)  } &
ix_{1}^{\left(  \mathbf{2}\right)  }+x_{2}^{\left(  \mathbf{2}\right)  } & 0 &
0\\
ix_{1}^{\left(  \mathbf{2}\right)  }-x_{2}^{\left(  \mathbf{2}\right)  } &
x_{0}^{\left(  \mathbf{2}\right)  }-ix_{3}^{\left(  \mathbf{2}\right)  } & 0 &
0
\end{array}
\right)  ,\label{m3}\\
&  \left(  x_{0}^{\left(  \mathbf{1}\right)  }\right)  ^{2}+\left(
\overrightarrow{x}^{\left(  \mathbf{1}\right)  }\right)  ^{2}=\left(
x_{0}^{\left(  \mathbf{2}\right)  }\right)  ^{2}+\left(  \overrightarrow
{x}^{\left(  \mathbf{2}\right)  }\right)  ^{2}=1,\ \ \ x_{j}^{\left(
\mathbf{1}\right)  },x_{j}^{\left(  \mathbf{2}\right)  }\in\mathbb{R}%
,\ \ \ j=0,1,2,3.
\end{align}

The direct multiplication table of $SU^{\left[  \mathbf{3}\right]  }\left(
2\right)  $ in terms of parameters (as (\ref{a})--\ref{d})) is too cumbersome
to display here.

There are eight $4\times4$ ternary elementary $\Sigma$-matrices%
\begin{equation}
\Sigma_{j}^{\left(  \mathbf{1}\right)  }=\left(
\begin{array}
[c]{cc}%
\mathsf{0} & \sigma_{j}\\
\mathsf{0} & \mathsf{0}%
\end{array}
\right)  ,\ \ \ \Sigma_{j}^{\left(  \mathbf{2}\right)  }=\left(
\begin{array}
[c]{cc}%
\mathsf{0} & \mathsf{0}\\
\sigma_{j} & \mathsf{0}%
\end{array}
\right)  ,\ \ \ \ \ \ \ j=0,1,2,3\label{s12}%
\end{equation}
which are ternary nilpotent%
\begin{equation}
\left(  \Sigma_{j}^{\left(  \mathbf{1}\right)  }\right)  ^{3}=\mathbf{0}%
,\ \ \ \ \ \ \ \ \left(  \Sigma_{j}^{\left(  \mathbf{2}\right)  }\right)
^{3}=\mathbf{0}.\label{ss0}%
\end{equation}
We recall the products of two and three $\sigma$-matrices%
\begin{align}
\sigma_{k}\sigma_{l} &  =\delta_{kl}\sigma_{0}+i\epsilon_{klm}\sigma
_{m},\label{s1}\\
\sigma_{k}\sigma_{l}\sigma_{m} &  =\delta_{kl}\sigma_{m}-\delta_{km}\sigma
_{l}+\delta_{lm}\sigma_{k}+i\epsilon_{klm}\sigma_{0}.\label{s2}%
\end{align}

\begin{proposition}
The Cayley table of ternary elementary $\Sigma$-matrices (\ref{s12}) can be
obtained from their multiplication%
\begin{align}
\mu^{\left[  \mathbf{3}\right]  }\left[  \Sigma_{k}^{\left(  \mathbf{1}%
\right)  },\Sigma_{l}^{\left(  \mathbf{2}\right)  },\Sigma_{m}^{\left(
\mathbf{1}\right)  }\right]   &  =\Sigma_{k}^{\left(  \mathbf{1}\right)
}\Sigma_{l}^{\left(  \mathbf{2}\right)  }\Sigma_{m}^{\left(  \mathbf{1}%
\right)  }=\delta_{kl}\Sigma_{m}^{\left(  \mathbf{1}\right)  }-\delta
_{km}\Sigma_{l}^{\left(  \mathbf{1}\right)  }+\delta_{lm}\Sigma_{k}^{\left(
\mathbf{1}\right)  }+i\epsilon_{klm}\Sigma_{0}^{\left(  \mathbf{1}\right)
},\label{ct1}\\
\mu^{\left[  \mathbf{3}\right]  }\left[  \Sigma_{k}^{\left(  \mathbf{2}%
\right)  },\Sigma_{l}^{\left(  \mathbf{1}\right)  },\Sigma_{m}^{\left(
\mathbf{2}\right)  }\right]   &  =\Sigma_{k}^{\left(  \mathbf{2}\right)
}\Sigma_{l}^{\left(  \mathbf{1}\right)  }\Sigma_{m}^{\left(  \mathbf{2}%
\right)  }=\delta_{kl}\Sigma_{m}^{\left(  \mathbf{2}\right)  }-\delta
_{km}\Sigma_{l}^{\left(  \mathbf{2}\right)  }+\delta_{lm}\Sigma_{k}^{\left(
\mathbf{2}\right)  }+i\epsilon_{klm}\Sigma_{0}^{\left(  \mathbf{2}\right)
},\label{ct2}%
\end{align}
while other products give zero $4\times4$ matrix $\mathbf{0}$.
\end{proposition}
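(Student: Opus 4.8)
The plan is to reduce both ternary products to the single three-$\sigma$-matrix identity (\ref{s2}) by exploiting the sparse block structure of the elementary $\Sigma$-matrices (\ref{s12}). Each $\Sigma_j^{(1)}$ carries its only nonzero $2\times2$ block $\sigma_j$ in the upper-right position, while each $\Sigma_j^{(2)}$ carries $\sigma_j$ in the lower-left position, so the entire computation is governed by how these blocks migrate under block multiplication.

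First I would compute the surviving product $\Sigma_k^{(1)}\Sigma_l^{(2)}\Sigma_m^{(1)}$ directly. The intermediate product $\Sigma_k^{(1)}\Sigma_l^{(2)}$ places the single block $\sigma_k\sigma_l$ in the upper-left corner, and right-multiplication by $\Sigma_m^{(1)}$ transports it to the upper-right block, yielding $\sigma_k\sigma_l\sigma_m$ in the $(1,2)$ position and zeros elsewhere. At this point I would invoke (\ref{s2}) to expand $\sigma_k\sigma_l\sigma_m=\delta_{kl}\sigma_m-\delta_{km}\sigma_l+\delta_{lm}\sigma_k+i\epsilon_{klm}\sigma_0$, and then re-identify each of the four $2\times2$ terms sitting in the upper-right block with the corresponding elementary matrix $\Sigma_m^{(1)},\Sigma_l^{(1)},\Sigma_k^{(1)},\Sigma_0^{(1)}$ from (\ref{s12}); this yields (\ref{ct1}) verbatim. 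The second relation (\ref{ct2}) follows by the identical computation with the two block positions interchanged: $\Sigma_k^{(2)}\Sigma_l^{(1)}$ lands $\sigma_k\sigma_l$ in the lower-right corner, right-multiplication by $\Sigma_m^{(2)}$ transports it to the lower-left block, and (\ref{s2}) together with the re-identification gives (\ref{ct2}).

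It then remains to verify the vanishing of all other ternary products. Here I would run a short case analysis over the eight superscript-patterns $(a)(b)(c)$ with $a,b,c\in\{1,2\}$. The key observation is that two consecutive factors of the same superscript annihilate, since $\Sigma^{(1)}\Sigma^{(1)}=\mathsf{0}$ and $\Sigma^{(2)}\Sigma^{(2)}=\mathsf{0}$ (the product of two matrices both supported on the upper-right, or both on the lower-left, block is zero). Hence any pattern containing a repeated consecutive superscript vanishes, which eliminates the six non-alternating patterns and leaves only $(1)(2)(1)$ and $(2)(1)(2)$ nonzero. As a byproduct this recovers the nilpotency (\ref{ss0}) as the special cases $k=l=m$ within a single type.

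There is no serious obstacle, as this is essentially routine block algebra; the only points requiring care are the correct bookkeeping of which block the product occupies at each stage and the consistent re-identification of the resulting $\sigma$-blocks with the elementary matrices of the matching superscript. I would also note that the ternary setting sanctions only products of exactly three factors, so the statement legitimately refers to $\mu^{[3]}$ and does not require the intermediate binary products to be closed.
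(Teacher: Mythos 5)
Your proof is correct and follows essentially the same route as the paper: reduce each surviving ternary product to a single $2\times2$ block containing $\sigma_k\sigma_l\sigma_m$ and then apply the identity (\ref{s2}). You are in fact slightly more complete than the paper's proof, since you explicitly verify (via the annihilation $\Sigma^{(a)}\Sigma^{(a)}=\mathbf{0}$ of consecutive equal superscripts) that the six non-alternating patterns vanish, a point the paper merely asserts.
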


\begin{proof}
We have in components%
\begin{align}
\Sigma_{k}^{\left(  \mathbf{1}\right)  }\Sigma_{l}^{\left(  \mathbf{2}\right)
}\Sigma_{m}^{\left(  \mathbf{1}\right)  }  &  =\left(
\begin{array}
[c]{cc}%
\mathsf{0} & \sigma_{k}\sigma_{l}\sigma_{m}\\
\mathsf{0} & \mathsf{0}%
\end{array}
\right)  ,\\
\Sigma_{k}^{\left(  \mathbf{2}\right)  }\Sigma_{l}^{\left(  \mathbf{1}\right)
}\Sigma_{m}^{\left(  \mathbf{2}\right)  }  &  =\left(
\begin{array}
[c]{cc}%
\mathsf{0} & \mathsf{0}\\
\sigma_{k}\sigma_{l}\sigma_{m} & \mathsf{0}%
\end{array}
\right)  ,
\end{align}
and then use (\ref{s2}).
\end{proof}

We introduce the ternary commutator and anti-commutator by analogy with
(\ref{c2})-(\ref{c2a})%
\begin{align}
\left[  a,b,c\right]  ^{\left[  3\right]  }  &
=abc+bca+cab-acb-bac-cba,\label{tc}\\
\left\{  a,b,c\right\}  ^{\left[  3\right]  }  &  =abc+bca+cab+acb+bac+cba,
\label{ta}%
\end{align}
where $a,b,c$ are $\Sigma$-matrices. Using (\ref{ct1})--(\ref{ct2}), for
elementary $\Sigma$-matrices we find that ternary commutators are proportional
to the corresponding ternary units (cf. (\ref{c2})--(\ref{c2a}))%
\begin{align}
\left[  \Sigma_{k}^{\left(  \mathbf{1}\right)  }\Sigma_{l}^{\left(
\mathbf{1}\right)  }\Sigma_{m}^{\left(  \mathbf{2}\right)  }\right]  ^{\left[
3\right]  }  &  =\left[  \Sigma_{k}^{\left(  \mathbf{1}\right)  }\Sigma
_{l}^{\left(  \mathbf{2}\right)  }\Sigma_{m}^{\left(  \mathbf{1}\right)
}\right]  ^{\left[  3\right]  }=\left[  \Sigma_{k}^{\left(  \mathbf{2}\right)
}\Sigma_{l}^{\left(  \mathbf{1}\right)  }\Sigma_{m}^{\left(  \mathbf{1}%
\right)  }\right]  ^{\left[  3\right]  }=2i\epsilon_{klm}\Sigma_{0}^{\left(
\mathbf{1}\right)  },\label{c3}\\
\left[  \Sigma_{k}^{\left(  \mathbf{2}\right)  }\Sigma_{l}^{\left(
\mathbf{2}\right)  }\Sigma_{m}^{\left(  \mathbf{1}\right)  }\right]  ^{\left[
3\right]  }  &  =\left[  \Sigma_{k}^{\left(  \mathbf{2}\right)  }\Sigma
_{l}^{\left(  \mathbf{1}\right)  }\Sigma_{m}^{\left(  \mathbf{2}\right)
}\right]  ^{\left[  3\right]  }=\left[  \Sigma_{k}^{\left(  \mathbf{1}\right)
}\Sigma_{l}^{\left(  \mathbf{2}\right)  }\Sigma_{m}^{\left(  \mathbf{2}%
\right)  }\right]  ^{\left[  3\right]  }=2i\epsilon_{klm}\Sigma_{0}^{\left(
\mathbf{2}\right)  },
\end{align}
and the ternary anti-commutators are%
\begin{align}
\left\{  \Sigma_{k}^{\left(  \mathbf{1}\right)  }\Sigma_{l}^{\left(
\mathbf{1}\right)  }\Sigma_{m}^{\left(  \mathbf{2}\right)  }\right\}
^{\left[  3\right]  }  &  =-2\delta_{kl}\Sigma_{m}^{\left(  \mathbf{1}\right)
}+2\delta_{km}\Sigma_{l}^{\left(  \mathbf{1}\right)  }+2\delta_{lm}\Sigma
_{k}^{\left(  \mathbf{1}\right)  },\\
\left\{  \Sigma_{k}^{\left(  \mathbf{1}\right)  }\Sigma_{l}^{\left(
\mathbf{2}\right)  }\Sigma_{m}^{\left(  \mathbf{1}\right)  }\right\}
^{\left[  3\right]  }  &  =2\delta_{kl}\Sigma_{m}^{\left(  \mathbf{1}\right)
}-2\delta_{km}\Sigma_{l}^{\left(  \mathbf{1}\right)  }+2\delta_{lm}\Sigma
_{k}^{\left(  \mathbf{1}\right)  },\\
\left\{  \Sigma_{k}^{\left(  \mathbf{2}\right)  }\Sigma_{l}^{\left(
\mathbf{1}\right)  }\Sigma_{m}^{\left(  \mathbf{1}\right)  }\right\}
^{\left[  3\right]  }  &  =2\delta_{kl}\Sigma_{m}^{\left(  \mathbf{1}\right)
}+2\delta_{km}\Sigma_{l}^{\left(  \mathbf{1}\right)  }-2\delta_{lm}\Sigma
_{k}^{\left(  \mathbf{1}\right)  },
\end{align}%
\begin{align}
\left\{  \Sigma_{k}^{\left(  \mathbf{2}\right)  }\Sigma_{l}^{\left(
\mathbf{2}\right)  }\Sigma_{m}^{\left(  \mathbf{1}\right)  }\right\}
^{\left[  3\right]  }  &  =-2\delta_{kl}\Sigma_{m}^{\left(  \mathbf{2}\right)
}+2\delta_{km}\Sigma_{l}^{\left(  \mathbf{2}\right)  }+2\delta_{lm}\Sigma
_{k}^{\left(  \mathbf{2}\right)  },\\
\left\{  \Sigma_{k}^{\left(  \mathbf{2}\right)  }\Sigma_{l}^{\left(
\mathbf{1}\right)  }\Sigma_{m}^{\left(  \mathbf{2}\right)  }\right\}
^{\left[  3\right]  }  &  =2\delta_{kl}\Sigma_{m}^{\left(  \mathbf{2}\right)
}-2\delta_{km}\Sigma_{l}^{\left(  \mathbf{2}\right)  }+2\delta_{lm}\Sigma
_{k}^{\left(  \mathbf{2}\right)  },\\
\left\{  \Sigma_{k}^{\left(  \mathbf{1}\right)  }\Sigma_{l}^{\left(
\mathbf{2}\right)  }\Sigma_{m}^{\left(  \mathbf{2}\right)  }\right\}
^{\left[  3\right]  }  &  =2\delta_{kl}\Sigma_{m}^{\left(  \mathbf{2}\right)
}+2\delta_{km}\Sigma_{l}^{\left(  \mathbf{2}\right)  }-2\delta_{lm}\Sigma
_{k}^{\left(  \mathbf{2}\right)  }, \label{sk2}%
\end{align}
while other (anti)commutators vanish.

The expansion of an $SU^{\left[  \mathbf{3}\right]  }\left(  2\right)  $
matrix (\ref{mxs}) in terms of the elementary $\Sigma$-matrices becomes%
\begin{align}
&  \mathbf{M}=x_{0}^{\left(  \mathbf{1}\right)  }\Sigma_{0}^{\left(
\mathbf{1}\right)  }+i\overrightarrow{x}^{\left(  \mathbf{1}\right)
}\overrightarrow{\Sigma}^{\left(  \mathbf{1}\right)  }+x_{0}^{\left(
\mathbf{2}\right)  }\Sigma_{0}^{\left(  \mathbf{2}\right)  }+i\overrightarrow
{x}^{\left(  \mathbf{2}\right)  }\overrightarrow{\Sigma}^{\left(
\mathbf{2}\right)  },\\
&  \overrightarrow{\Sigma}^{\left(  \mathbf{1,2}\right)  }=\left(  \Sigma
_{1}^{\left(  \mathbf{1,2}\right)  },\Sigma_{2}^{\left(  \mathbf{1,2}\right)
},\Sigma_{3}^{\left(  \mathbf{1,2}\right)  }\right)  .
\end{align}

The presentation of $\mathbf{M\in}SU^{\left[  \mathbf{3}\right]  }\left(
2\right)  $ in terms of the Hadamard product is given by (\ref{mx0}), where
now the block-matrices of parameters (\ref{xxi}) are%
\begin{equation}
\mathbf{X}_{j}=\left(
\begin{array}
[c]{cc}%
\mathsf{0} & x_{j}^{\left(  \mathbf{1}\right)  }I_{2}\\
x_{j}^{\left(  \mathbf{2}\right)  }I_{2} & \mathsf{0}%
\end{array}
\right)  ,\ \ \ j=0,1,2,3, \label{xi}%
\end{equation}
and the full ternary $\mathit{\Sigma}$-matrices are%
\begin{equation}
\mathit{\Sigma}_{j}=\Sigma_{j}^{\left(  \mathbf{1}\right)  }+\Sigma
_{j}^{\left(  \mathbf{2}\right)  }=\left(
\begin{array}
[c]{cc}%
\mathsf{0} & \sigma_{j}\\
\sigma_{j} & \mathsf{0}%
\end{array}
\right)  ,\ \ \ j=0,1,2,3. \label{sss}%
\end{equation}

\begin{proposition}
The Cayley table of the full ternary $\mathit{\Sigma}$-matrices is given by%
\begin{align}
\mathit{\Sigma}_{k}\mathit{\Sigma}_{l}\mathit{\Sigma}_{m}  &  =\delta
_{kl}\mathit{\Sigma}_{m}-\delta_{km}\mathit{\Sigma}_{l}+\delta_{lm}%
\mathit{\Sigma}_{k}+i\epsilon_{klm}\mathit{\Sigma}_{0},\label{skl}\\[1pt]
\mathit{\Sigma}_{k}\mathit{\Sigma}_{l}\mathit{\Sigma}_{0}  &  =\mathit{\Sigma
}_{k}\mathit{\Sigma}_{0}\mathit{\Sigma}_{l}=\mathit{\Sigma}_{0}\mathit{\Sigma
}_{k}\mathit{\Sigma}_{l}=\delta_{kl}\mathit{\Sigma}_{0}+i\epsilon
_{klm}\mathit{\Sigma}_{m},\label{skl2}\\
\mathit{\Sigma}_{k}\mathit{\Sigma}_{0}\mathit{\Sigma}_{0}  &  =\mathit{\Sigma
}_{0}\mathit{\Sigma}_{k}\mathit{\Sigma}_{0}=\mathit{\Sigma}_{0}\mathit{\Sigma
}_{0}\mathit{\Sigma}_{k}=\mathit{\Sigma}_{k},\ \ \ k,l,m=1,2,3. \label{skl3}%
\end{align}

\end{proposition}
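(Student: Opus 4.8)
The plan is to reduce every ternary product of full $\mathit{\Sigma}$-matrices to an ordinary triple product of $\sigma$-matrices sitting in the off-diagonal blocks, and then to invoke the already-recorded $\sigma$-matrix identities (\ref{s1}) and (\ref{s2}). First I would isolate the single structural fact that drives everything: for the block form (\ref{sss}), the intermediate binary product is block-diagonal,
\[
\mathit{\Sigma}_{k}\mathit{\Sigma}_{l}=\mathrm{diag}\left(\sigma_{k}\sigma_{l},\sigma_{k}\sigma_{l}\right),
\]
so that it is \emph{not} of full-$\mathit{\Sigma}$ type --- exactly the expected signature of non-derivedness --- whereas multiplying once more by $\mathit{\Sigma}_{m}$ returns an off-diagonal matrix
\[
\mathit{\Sigma}_{k}\mathit{\Sigma}_{l}\mathit{\Sigma}_{m}=\left(\begin{array}{cc}\mathsf{0}&\sigma_{k}\sigma_{l}\sigma_{m}\\\sigma_{k}\sigma_{l}\sigma_{m}&\mathsf{0}\end{array}\right).
\]
Thus the ternary product of full $\mathit{\Sigma}$-matrices is governed entirely by the triple product $\sigma_{k}\sigma_{l}\sigma_{m}$ placed in each off-diagonal block.

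For the main relation (\ref{skl}) I would substitute (\ref{s2}) into that off-diagonal block. Since the block form is linear in its single $2\times2$ entry, and since $\sigma_{0}=I_{2}$ corresponds to $\mathit{\Sigma}_{0}$, each monomial $\sigma_{m},\sigma_{l},\sigma_{k},\sigma_{0}$ on the right of (\ref{s2}) lifts verbatim to $\mathit{\Sigma}_{m},\mathit{\Sigma}_{l},\mathit{\Sigma}_{k},\mathit{\Sigma}_{0}$, which yields (\ref{skl}) with no further computation.

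The remaining relations are the specializations obtained by setting one or two indices to $0$. For (\ref{skl2}) I would use $\sigma_{0}=I_{2}$ to collapse the triple product in every position, $\sigma_{k}\sigma_{l}\sigma_{0}=\sigma_{0}\sigma_{k}\sigma_{l}=\sigma_{k}\sigma_{0}\sigma_{l}=\sigma_{k}\sigma_{l}$, and then apply the two-factor identity (\ref{s1}); the position-independence asserted there is the one point worth stating explicitly, and it follows solely from $\sigma_{0}$ being the $2\times2$ identity. For (\ref{skl3}) the same centrality of $\sigma_{0}$ gives $\sigma_{k}\sigma_{0}\sigma_{0}=\sigma_{k}$ in any slot, hence $\mathit{\Sigma}_{k}$.

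I do not expect a genuine obstacle: the entire content is already carried by (\ref{s1})--(\ref{s2}), and the only things to be careful about are the bookkeeping of which slot each $\mathit{\Sigma}_{0}$ occupies and the observation that the intermediate binary product leaves the full-$\mathit{\Sigma}$ family (landing in the block-diagonal matrices), so that closure is genuinely ternary rather than inherited from a binary operation.
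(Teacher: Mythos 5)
Your proof is correct, and it takes a somewhat different route from the paper's. The paper proves this proposition by expanding each full matrix as the sum of elementary ones, $\mathit{\Sigma}_{j}=\Sigma_{j}^{\left(\mathbf{1}\right)}+\Sigma_{j}^{\left(\mathbf{2}\right)}$ from (\ref{sss}), noting that of the eight resulting cross terms only the two alternating products $\Sigma_{k}^{\left(\mathbf{1}\right)}\Sigma_{l}^{\left(\mathbf{2}\right)}\Sigma_{m}^{\left(\mathbf{1}\right)}$ and $\Sigma_{k}^{\left(\mathbf{2}\right)}\Sigma_{l}^{\left(\mathbf{1}\right)}\Sigma_{m}^{\left(\mathbf{2}\right)}$ survive (the rest vanish, cf.\ the nilpotency (\ref{ss0})), and then re-summing the already-established elementary Cayley table (\ref{ct1})--(\ref{ct2}). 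You instead multiply the $4\times4$ block matrices directly: the binary product lands in the block-diagonal matrices, $\mathit{\Sigma}_{k}\mathit{\Sigma}_{l}=\mathrm{diag}\left(\sigma_{k}\sigma_{l},\sigma_{k}\sigma_{l}\right)$, and the ternary product returns to the off-diagonal form with $\sigma_{k}\sigma_{l}\sigma_{m}$ in both corners, after which (\ref{s1})--(\ref{s2}) and the linearity of the block placement give all three relations. The two arguments rest on the same $\sigma$-matrix identities; yours is more self-contained and makes the non-derivedness visible structurally (the intermediate binary product leaves the full-$\mathit{\Sigma}$ family), while the paper's reuses the elementary-matrix machinery it has already built and thereby exhibits exactly which cross terms vanish in the decomposition. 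Your handling of the $\mathit{\Sigma}_{0}$ cases via the centrality of $\sigma_{0}=I_{2}$ is also sound and gives the position-independence claimed in (\ref{skl2})--(\ref{skl3}).
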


\begin{proof}
This follows from (\ref{ct1})-(\ref{ct2}) and (\ref{sss}), taking into account
the ternary nilpotency (\ref{ss0}) of the elementary $\Sigma$-matrices.
\end{proof}

It follows from (\ref{sn1}) and (\ref{skl}) that all full $\mathit{\Sigma}%
$-matrices are ternary idempotents (cf. (\ref{sn2}) and (\ref{ss0}))%
\begin{equation}
\left(  \mathit{\Sigma}_{j}\right)  ^{3}=\mathit{\Sigma}_{j}%
,\ \ \ \ \ \ \ \ j=0,1,2,3.
\end{equation}

In the same way, from (\ref{c3})--(\ref{sk2}) we get for full ternary
$\mathit{\Sigma}$-matrices the ternary commutators (cf. (\ref{c2}%
)--(\ref{c2a}))%
\begin{equation}
\left[  \mathit{\Sigma}_{k}\mathit{\Sigma}_{l}\mathit{\Sigma}_{m}\right]
^{\left[  3\right]  }=6i\epsilon_{klm}\Sigma_{0},\ \ \ k,l,m=1,2,3,
\end{equation}
and ternary anti-commutators%
\begin{equation}
\left\{  \mathit{\Sigma}_{k}\mathit{\Sigma}_{l}\mathit{\Sigma}_{m}\right\}
^{\left[  3\right]  }=2\delta_{kl}\mathit{\Sigma}_{m}+2\delta_{km}%
\mathit{\Sigma}_{l}+2\delta_{lm}\mathit{\Sigma}_{k}. \label{sks}%
\end{equation}

Using (\ref{mx0}), (\ref{xi}) and (\ref{sss}), we obtain the $SU^{\left[
\mathbf{3}\right]  }\left(  2\right)  $ matrix (cf. (\ref{mx}))%
\begin{equation}
\mathbf{M}=\left(
\begin{array}
[c]{cc}%
\mathsf{0} & x_{0}^{\left(  \mathbf{1}\right)  }\sigma_{0}+i\overrightarrow
{x}^{\left(  \mathbf{1}\right)  }\overrightarrow{\sigma}\\
x_{0}^{\left(  \mathbf{2}\right)  }\sigma_{0}+i\overrightarrow{x}^{\left(
\mathbf{2}\right)  }\overrightarrow{\sigma} & \mathsf{0}%
\end{array}
\right)  .
\end{equation}

The ternary multiplication in $SU^{\left[  \mathbf{3}\right]  }\left(
2\right)  $ can be derived from the general case (\ref{mm1})--(\ref{mmn}), as
follows%
\begin{align}
x_{0}^{\left(  \mathbf{1}\right)  }\sigma_{0}+i\overrightarrow{x}^{\left(
\mathbf{1}\right)  }\overrightarrow{\sigma}  &  =\left(  x_{0}^{\left(
\mathbf{1}\right)  \prime}\sigma_{0}+i\overrightarrow{x}^{\left(
\mathbf{1}\right)  \prime}\overrightarrow{\sigma}\right)  \left(
x_{0}^{\left(  \mathbf{2}\right)  \prime\prime}\sigma_{0}+i\overrightarrow
{x}^{\left(  \mathbf{2}\right)  \prime\prime}\overrightarrow{\sigma}\right)
\left(  x_{0}^{\left(  \mathbf{1}\right)  \prime\prime\prime}\sigma
_{0}+i\overrightarrow{x}^{\left(  \mathbf{1}\right)  \prime\prime\prime
}\overrightarrow{\sigma}\right)  ,\label{x11}\\
x_{0}^{\left(  \mathbf{2}\right)  }\sigma_{0}+i\overrightarrow{x}^{\left(
\mathbf{2}\right)  }\overrightarrow{\sigma}  &  =\left(  x_{0}^{\left(
\mathbf{2}\right)  \prime}\sigma_{0}+i\overrightarrow{x}^{\left(
\mathbf{2}\right)  \prime}\overrightarrow{\sigma}\right)  \left(
x_{0}^{\left(  \mathbf{1}\right)  \prime\prime}\sigma_{0}+i\overrightarrow
{x}^{\left(  \mathbf{1}\right)  \prime\prime}\overrightarrow{\sigma}\right)
\left(  x_{0}^{\left(  \mathbf{2}\right)  \prime\prime\prime}\sigma
_{0}+i\overrightarrow{x}^{\left(  \mathbf{2}\right)  \prime\prime\prime
}\overrightarrow{\sigma}\right)  . \label{x12}%
\end{align}

Using (\ref{s1})--(\ref{s2}), we get the exact form of the ternary group
$SU^{\left[  \mathbf{3}\right]  }\left(  2\right)  $ multiplication
(\ref{x11})--(\ref{x12}) in terms of the group parameters as (cf.
(\ref{a})--(\ref{d}))%
\begin{align}
x_{0}^{\left(  \mathbf{1}\right)  }  &  =x_{0}^{\left(  \mathbf{1}\right)
\prime}x_{0}^{\left(  \mathbf{2}\right)  \prime\prime}x_{0}^{\left(
\mathbf{1}\right)  \prime\prime\prime}-x_{0}^{\left(  \mathbf{1}\right)
\prime}\left(  \overrightarrow{x}^{\left(  \mathbf{2}\right)  \prime\prime
}\overrightarrow{x}^{\left(  \mathbf{1}\right)  \prime\prime\prime}\right)
-x_{0}^{\left(  \mathbf{2}\right)  \prime\prime}\left(  \overrightarrow
{x}^{\left(  \mathbf{1}\right)  \prime}\overrightarrow{x}^{\left(
\mathbf{1}\right)  \prime\prime\prime}\right) \nonumber\\
&  -x_{0}^{\left(  \mathbf{1}\right)  \prime\prime\prime}\left(
\overrightarrow{x}^{\left(  \mathbf{1}\right)  \prime}\overrightarrow
{x}^{\left(  \mathbf{2}\right)  \prime\prime}\right)  +\overrightarrow
{x}^{\left(  \mathbf{1}\right)  \prime}\left(  \overrightarrow{x}^{\left(
\mathbf{2}\right)  \prime\prime}\times\overrightarrow{x}^{\left(
\mathbf{1}\right)  \prime\prime\prime}\right)  ,\label{x10}\\
\overrightarrow{x}^{\left(  \mathbf{1}\right)  }  &  =\overrightarrow
{x}^{\left(  \mathbf{2}\right)  \prime\prime}\left(  \overrightarrow
{x}^{\left(  \mathbf{1}\right)  \prime}\times\overrightarrow{x}^{\left(
\mathbf{1}\right)  \prime\prime\prime}\right)  -\overrightarrow{x}^{\left(
\mathbf{1}\right)  \prime}\left(  \overrightarrow{x}^{\left(  \mathbf{2}%
\right)  \prime\prime}\times\overrightarrow{x}^{\left(  \mathbf{1}\right)
\prime\prime\prime}\right)  -\overrightarrow{x}^{\left(  \mathbf{1}\right)
\prime\prime\prime}\left(  \overrightarrow{x}^{\left(  \mathbf{2}\right)
\prime\prime}\times\overrightarrow{x}^{\left(  \mathbf{1}\right)  \prime
}\right) \nonumber\\
&  -x_{0}^{\left(  \mathbf{1}\right)  \prime}\left(  \overrightarrow
{x}^{\left(  \mathbf{2}\right)  \prime\prime}\times\overrightarrow{x}^{\left(
\mathbf{1}\right)  \prime\prime\prime}\right)  -x_{0}^{\left(  \mathbf{2}%
\right)  \prime\prime}\left(  \overrightarrow{x}^{\left(  \mathbf{1}\right)
\prime}\times\overrightarrow{x}^{\left(  \mathbf{1}\right)  \prime\prime
\prime}\right)  -x_{0}^{\left(  \mathbf{1}\right)  \prime\prime\prime}\left(
\overrightarrow{x}^{\left(  \mathbf{1}\right)  \prime}\times\overrightarrow
{x}^{\left(  \mathbf{2}\right)  \prime\prime}\right)  , \label{x111}%
\end{align}%
\begin{align}
x_{0}^{\left(  \mathbf{2}\right)  }  &  =x_{0}^{\left(  \mathbf{2}\right)
\prime}x_{0}^{\left(  \mathbf{1}\right)  \prime\prime}x_{0}^{\left(
\mathbf{2}\right)  \prime\prime\prime}-x_{0}^{\left(  \mathbf{2}\right)
\prime}\left(  \overrightarrow{x}^{\left(  \mathbf{1}\right)  \prime\prime
}\overrightarrow{x}^{\left(  \mathbf{2}\right)  \prime\prime\prime}\right)
-x_{0}^{\left(  \mathbf{1}\right)  \prime\prime}\left(  \overrightarrow
{x}^{\left(  \mathbf{2}\right)  \prime}\overrightarrow{x}^{\left(
\mathbf{2}\right)  \prime\prime\prime}\right) \nonumber\\
&  -x_{0}^{\left(  \mathbf{2}\right)  \prime\prime\prime}\left(
\overrightarrow{x}^{\left(  \mathbf{2}\right)  \prime}\overrightarrow
{x}^{\left(  \mathbf{1}\right)  \prime\prime}\right)  +\overrightarrow
{x}^{\left(  \mathbf{2}\right)  \prime}\left(  \overrightarrow{x}^{\left(
\mathbf{1}\right)  \prime\prime}\times\overrightarrow{x}^{\left(
\mathbf{2}\right)  \prime\prime\prime}\right)  ,\label{x20}\\
\overrightarrow{x}^{\left(  \mathbf{2}\right)  }  &  =\overrightarrow
{x}^{\left(  \mathbf{1}\right)  \prime\prime}\left(  \overrightarrow
{x}^{\left(  \mathbf{2}\right)  \prime}\times\overrightarrow{x}^{\left(
\mathbf{2}\right)  \prime\prime\prime}\right)  -\overrightarrow{x}^{\left(
\mathbf{2}\right)  \prime}\left(  \overrightarrow{x}^{\left(  \mathbf{1}%
\right)  \prime\prime}\times\overrightarrow{x}^{\left(  \mathbf{2}\right)
\prime\prime\prime}\right)  -\overrightarrow{x}^{\left(  \mathbf{2}\right)
\prime\prime\prime}\left(  \overrightarrow{x}^{\left(  \mathbf{1}\right)
\prime\prime}\times\overrightarrow{x}^{\left(  \mathbf{2}\right)  \prime
}\right) \nonumber\\
&  -x_{0}^{\left(  \mathbf{2}\right)  \prime}\left(  \overrightarrow
{x}^{\left(  \mathbf{1}\right)  \prime\prime}\times\overrightarrow{x}^{\left(
\mathbf{2}\right)  \prime\prime\prime}\right)  -x_{0}^{\left(  \mathbf{1}%
\right)  \prime\prime}\left(  \overrightarrow{x}^{\left(  \mathbf{2}\right)
\prime}\times\overrightarrow{x}^{\left(  \mathbf{2}\right)  \prime\prime
\prime}\right)  -x_{0}^{\left(  \mathbf{2}\right)  \prime\prime\prime}\left(
\overrightarrow{x}^{\left(  \mathbf{2}\right)  \prime}\times\overrightarrow
{x}^{\left(  \mathbf{1}\right)  \prime\prime}\right)  , \label{x21}%
\end{align}
where $\overrightarrow{x}\times\overrightarrow{y}$ is the standard $3D$ vector product.

The additional one parameter ternary idempotents in $SU^{\left[
\mathbf{3}\right]  }\left(  2\right)  $ are real $4\times4$ matrices (cf.
(\ref{el1}))%
\begin{equation}
\mathbf{E}_{\operatorname*{id}}\left(  a\right)  =\mathbf{E}^{\left[
\mathbf{3}\right]  }\left(  a\right)  =\left(
\begin{array}
[c]{cc}%
\mathsf{0} & aI_{2}\\
\frac{1}{a}I_{2} & \mathsf{0}%
\end{array}
\right)  ,\ \ \ \ a\in\mathbb{R\setminus}\left\{  0\right\}  , \label{eid}%
\end{equation}
which are also left and right identities, but not middle identities, according
to (\ref{el1})--(\ref{er2}).

The querelement of the ternary group $SU^{\left[  \mathbf{3}\right]  }\left(
2\right)  =\left\langle \left\{  \mathbf{M}\right\}  \mid\mu^{\left[
\mathbf{3}\right]  },\left(  \widetilde{\_}\right)  \right\rangle $ coincides
with the matrix inverse $\widetilde{\mathbf{M}}=\mathbf{M}^{-1}$ (in the
ternary case only), as follows from the general formula (\ref{mq}).

\section{$n$\textsc{-ary semigroups and groups of} $\Sigma$\textsc{-matrices}}

Let us briefly recall the concept of the (binary) group of $\sigma$-matrices
(or the Pauli group), which is widely used in the quantum error correction and
stabilizer codes (see, e.g. \cite{nie/chu,bal/cen/hub,djordjevic}).

\subsection{The Pauli group}

The set of four $\sigma$-matrices do not form a group, because their
multiplication is not closed: the r.h.s. of (\ref{s1}) contains the
multipliers, informally $\sigma_{k}\sigma_{l}=s\sigma_{m}$, where $\left\{
s\right\}  =\pm1$, $\pm i$, and $k,l,m=1,2,3$. One approach to obtain a closed
multiplication is by enlarging the set by new elements which arise from
consequent products. It is obvious that the enlarged set will be finite, if
the multipliers are cyclic. Indeed, in the case of $\sigma$-matrices the
multipliers themselves form the cyclic group of order $4$ generated by the
imaginary unit%
\begin{equation}
\left\{  s\right\}  =\left\langle i\right\rangle _{4}.\label{si4}%
\end{equation}

In this way, the enlarged set contains $16$ new elements $\hat{\sigma}%
_{j}\left(  r_{j}\right)  $ which are constructed from $\sigma$-matrices and
powers of $i$ as follows%
\begin{equation}
\hat{\sigma}_{j}\left(  r_{j}\right)  =i^{r_{j}}\sigma_{j}%
,\ \ \ j=0,1,2,3,\ \ \ r_{j}=0,1,2,3,\label{is}%
\end{equation}
such that $\hat{\sigma}_{j}\left(  0\right)  \equiv\sigma_{j}$ (\ref{si}). The
set of $2\times2$ invertible matrices (\ref{is}) (we call them extended
$\sigma$-matrices or sigma matrices with phase) is closed under multiplication
being associative, as matrix products, moreover it contains the identity
$I_{2}=\hat{\sigma}_{0}\left(  0\right)  $, therefore%
\begin{equation}
\hat{G}_{\sigma}=\left\langle \left\{  \hat{\sigma}_{j}\left(  r_{j}\right)
\right\}  \mid\left(  \cdot_{\sigma}\right)  ,\left(  \_\right)  ^{-1}%
,I_{2}\right\rangle \label{gs}%
\end{equation}
is a finite binary group of order $16$. We call the group $\hat{G}_{\sigma}$ a
group of phase-shifted sigma matrices, which is convenient for polyadic
extentions. It is also called the Pauli group $\mathcal{P}_{1}$, diquaternion
group $DQ_{8}$, and by structure it is isomorphic to the central product of a
cyclic group $C_{4}$ of order $4$ and the dihedral group $D_{4}$ of order 8 as
$C_{4}\circ D_{4}$ or to the semi-direct product of the quaternion group
$Q_{8}$ of order $8$ and a cyclic group $C_{2}$ of order $2$ as $Q_{8}%
\rtimes_{2}C_{2}$, as well as $D_{4}\rtimes_{2}C_{2}$, and $\left(
C_{4}\rtimes_{2}C_{2}\right)  \rtimes_{3}C_{2}$ (see, e.g.
\cite{bag/bav/rus,bes/eic/bri,con/cur/nor}).

The Cayley table of $\hat{G}_{\sigma}$ can be obtained from the manifest
multiplication of the phase-shifted $\sigma$-matrices%
\begin{equation}
\hat{\sigma}_{k}\left(  r_{k}\right)  \cdot_{\sigma}\hat{\sigma}_{l}\left(
r_{l}\right)  =\delta_{kl}\hat{\sigma}_{0}\left(  r_{k}\dotplus_{4}%
r_{l}\right)  +\left\vert \epsilon_{klm}\right\vert \hat{\sigma}_{m}\left(
r_{k}\dotplus_{4}r_{l}\dotplus_{4}1\dotplus_{4}\left(  1-\epsilon
_{klm}\right)  \right)  ,\ \ \ k,l,m=1,2,3,\label{srs}%
\end{equation}
where $r_{k}\dotplus_{4}r_{l}\equiv\left(  r_{k}+r_{l}\right)
\operatorname{mod}4$, which follows directly from (\ref{s1}) on multiplying
both sides by $i^{r_{k}}i^{r_{l}}$. In (\ref{srs}) we introduced the
\textquotedblleft phase presentation\textquotedblright\ of the Levi-Civita
symbol%
\begin{equation}
\epsilon_{klm}=\left\vert \epsilon_{klm}\right\vert e^{i\frac{\pi}%
{2}(1-\epsilon_{klm})},\ \ \ k,l,m=1,2,3,\label{ek}%
\end{equation}
which is convenient for calculation of the Cayley table for $\sigma$-matrices
with phase (\ref{is}).

\subsection{Groups of phase-shifted sigma matrices}

Let us generalize the multiplier construction of $\sigma$-matrices (\ref{is})
in the following way. Instead of the cyclic group $\left\langle i\right\rangle
_{4}$ of order $4$, we introduce higher order cyclic groups which contain
$\left\langle i\right\rangle _{4}$ as the subgroup. For that we rewrite the
phase factor in (\ref{is}) as the $4$th root of unity%
\begin{equation}
i^{r_{j}}=e^{i\frac{2\pi}{q}r_{j}},\ \ q=4,\ \ r_{j}=0,1,2,3. \label{ir}%
\end{equation}

To make the set of $q$'s finite, we propose that $q$ should be among divisors
of $360$ which are multiples of $4$ (e.g. to have an integer number of
degrees, etc.), that is%
\begin{equation}
q\in\mathsf{Q}_{12}=\left\{  \mathbf{4}%
,8,12,20,24,36,40,60,72,120,180,360\right\}  . \label{q}%
\end{equation}

In this way we obtain $12$ cyclic groups of order $q$ in which the first one
corresponds to the cyclic group of order $q=4$ (the Pauli group, in bold)
generated by the imaginary unit $\left\langle i\right\rangle _{q=4}$ (see
(\ref{si4}) and (\ref{ir})). Then, now for the fixed $q$ from (\ref{q}), the
enlarged set of $q$-generalized $\sigma$-matrices with multipliers from the
cyclic group $C_{q}$ (with $e^{i\frac{2\pi}{q}}$ as the primitive root,
instead of the imaginary unit $e^{i\frac{\pi}{2}}$) will contain $q$ different
phase factors, and therefore we have

\begin{definition}
The enlarged set of the $q$-generalized phase-shifted $\sigma$-matrices (or
multiplied by $q$-th roots of unity) contains $4q$ elements%
\begin{equation}
\hat{\sigma}_{j}^{\left(  q\right)  }\left(  r_{j}\right)  =e^{i\frac{2\pi}%
{q}r_{j}}\sigma_{j},\ \ \ j=0,1,2,3,\ \ \ r_{j}=0,1,2,\ldots q-1,\ \ \ q\in
\mathsf{Q}_{12}. \label{sq}%
\end{equation}

\end{definition}

The multiplication $\left(  \cdot_{\sigma\left(  q\right)  }\right)  $ of the
phase-shifted $\sigma$-matrices (and the corresponding Cayley table) can be
obtained from (\ref{s1}) by multiplying both sides by the factors
$e^{i\frac{2\pi}{q}r_{k}}e^{i\frac{2\pi}{q}r_{l}}$ and using the
\textquotedblleft phase presentation\textquotedblright\ of the Levi-Civita
symbol (\ref{ek}), analogous to (\ref{srs})%
\begin{align}
\hat{\sigma}_{k}^{\left(  q\right)  }\left(  r_{k}\right)  \cdot
_{\sigma\left(  q\right)  }\hat{\sigma}_{l}^{\left(  q\right)  }\left(
r_{l}\right)   &  =\delta_{kl}\hat{\sigma}_{0}^{\left(  q\right)  }\left(
r_{k}\dotplus_{q}r_{l}\right)  +\left\vert \epsilon_{klm}\right\vert
\hat{\sigma}_{m}^{\left(  q\right)  }\left(  r_{k}\dotplus_{q}r_{l}%
\dotplus_{q}\frac{q}{4}\dotplus_{q}\frac{q}{4}\left(  1-\epsilon_{klm}\right)
\right)  ,\nonumber\\
k,l,m &  =1,2,3,\ \ \ r_{k,l}=0,1,2,\ldots q-1,\ \ \ q\in\mathsf{Q}%
_{12},\label{sm}%
\end{align}
where $r_{k}\dotplus_{q}r_{l}=\left(  r_{k}+r_{l}\right)  \operatorname{mod}%
q$. The identity now is $I_{2}=\hat{\sigma}_{0}^{\left(  q\right)  }\left(
0\right)  $. Instead of an involutory (\ref{in}) we have%
\begin{equation}
\hat{\sigma}_{k}^{\left(  q\right)  }\left(  r_{k}\right)  \cdot
_{\sigma\left(  q\right)  }\hat{\sigma}_{k}^{\left(  q\right)  }\left(
r_{k}\right)  =\hat{\sigma}_{0}^{\left(  q\right)  }\left(  \left(
2r_{k}\right)  \operatorname{mod}q\right)  .\label{rm}%
\end{equation}

\begin{proposition}
The sets of the phase-shifted $\sigma$-matrices (\ref{sq}) form $12$
non-isomorphic finite groups of order $4q$ (\ref{q})%
\begin{equation}
\hat{G}_{\sigma}^{\left(  q\right)  }=\left\langle \left\{  \hat{\sigma}%
_{j}^{\left(  q\right)  }\left(  r_{j}\right)  \right\}  \mid\left(
\cdot_{\sigma\left(  q\right)  }\right)  ,\left(  \_\right)  ^{-1}%
,I_{2}\right\rangle ,\ \ \ j=0,1,2,3,\ \ \ r_{j}=0,1,2,\ldots q-1,\ \ \ q\in
\mathsf{Q}_{12}.
\end{equation}

\end{proposition}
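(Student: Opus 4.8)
The plan is to verify the three group axioms (closure, associativity, existence of identity and inverses) for the set $\{\hat\sigma_j^{(q)}(r_j)\}$ under the operation $(\cdot_{\sigma(q)})$, and then to establish that the twelve groups obtained as $q$ ranges over $\mathsf{Q}_{12}$ are pairwise non-isomorphic. The first task is essentially bookkeeping built on the multiplication law (\ref{sm}), while the genuine content lies in counting elements correctly and in the non-isomorphism claim.

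First I would confirm that the set has exactly $4q$ elements: the index $j$ ranges over the four values $0,1,2,3$ and, for each $j$, the phase $r_j$ ranges over the $q$ residues modulo $q$, and distinct pairs $(j,r_j)$ give distinct matrices because the four $\sigma_j$ are linearly independent and the phases $e^{i\frac{2\pi}{q}r_j}$ are distinct $q$-th roots of unity. Closure follows directly by inspecting the right-hand side of (\ref{sm}): in either the $\delta_{kl}$ term or the $|\epsilon_{klm}|$ term the result is again some $\hat\sigma^{(q)}_{\ast}(\ast)$ with a phase index reduced modulo $q$ via $\dotplus_q$, so the product stays inside the set; the key point is that $q$ is a multiple of $4$, so that the shifts $\tfrac{q}{4}$ and $\tfrac{q}{4}(1-\epsilon_{klm})$ appearing in (\ref{sm}) are integers and hence legitimate phase indices. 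Associativity is inherited for free, since $(\cdot_{\sigma(q)})$ is just ordinary $2\times 2$ matrix multiplication restricted to this set. The identity is $I_2=\hat\sigma_0^{(q)}(0)$, as already noted below (\ref{sm}), and inverses exist because each $\hat\sigma_j^{(q)}(r_j)=e^{i\frac{2\pi}{q}r_j}\sigma_j$ is an invertible matrix whose inverse $e^{-i\frac{2\pi}{q}r_j}\sigma_j=\hat\sigma_j^{(q)}\bigl((-r_j)\bmod q\bigr)$ lies in the set; this uses $\sigma_j^{-1}=\sigma_j$ from (\ref{in}). Thus each $\hat G_\sigma^{(q)}$ is a finite group of order $4q$.

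For the non-isomorphism of the twelve groups, the cleanest argument is the order count itself: the $|\mathsf{Q}_{12}|=12$ values of $q$ in (\ref{q}) are pairwise distinct, so the orders $4q$ are pairwise distinct, and groups of different cardinality cannot be isomorphic. **The main obstacle**, such as it is, is purely a matter of presentation rather than mathematics: one must make sure the ``twelve non-isomorphic groups'' claim is read as twelve groups \emph{across} the different allowed values of $q$, not as a structural classification within a fixed order, since for a fixed $q$ there is a single group. I would therefore state explicitly that for $q,q'\in\mathsf{Q}_{12}$ with $q\neq q'$ we have $4q\neq 4q'$, whence $\hat G_\sigma^{(q)}\not\cong\hat G_\sigma^{(q')}$, which disposes of the non-isomorphism assertion immediately and avoids any appeal to finer invariants.
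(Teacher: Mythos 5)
Your proof is correct and follows essentially the same route as the paper's: closure via the cyclic structure of the phase factors in (\ref{sm}), the identity $I_{2}=\hat{\sigma}_{0}^{\left( q\right) }\left( 0\right)$, and inverses from $\sigma_{j}^{-1}=\sigma_{j}$ (\ref{in}). You are in fact more complete than the paper, whose proof neither mentions associativity nor justifies the non-isomorphism of the twelve groups; your observation that the orders $4q$ are pairwise distinct for $q\in\mathsf{Q}_{12}$ settles the latter immediately.
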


\begin{proof}
The set is $\left\{  \hat{\sigma}_{j}^{\left(  q\right)  }\left(
r_{j}\right)  \right\}  $ closed with respect to the product (\ref{sm}),
because the factors in (\ref{sq}) form a cyclic group, it contains the
identity $I_{2}=\hat{\sigma}_{0}^{\left(  q\right)  }\left(  0\right)  $, and
every element has an inverse.
\end{proof}

Note that now the finite binary groups up to the order 2000 are listed
\cite{bes/eic/bri1}.

\subsection{The $n$-ary semigroup of elementary $\Sigma$-matrices}

The set of $4\left(  n-1\right)  $ elementary $\Sigma$-matrices $\Sigma
_{j}^{\left[  \mathbf{k}\right]  }$ (\ref{sk}) is not closed under $n$-ary
multiplication $\mu^{\left[  \mathbf{n}\right]  }$ (\ref{mnm}) by the same
reason as for the set of $4$ binary $\sigma$-matrices: the r.h.s. of their
product (see (\ref{ct1})--(\ref{ct2}) for ternary case) contains the same
multipliers $r$, where $\left\{  r\right\}  =\pm1$, $\pm i$, or $\left\{
r\right\}  =\left\langle i\right\rangle _{4}$, the cyclic group of order $4$.
So we will use now the same approach as in the binary case: by enlarging the
set of elementary $\Sigma$-matrices.

\begin{definition}
The enlarged set of the $q$-generalized phase-shifted elementary $\Sigma
$-matrices contains $4q\left(  n-1\right)  $ elements (see (\ref{is}),
(\ref{q}) and (\ref{sq}))%
\begin{equation}
\hat{\Sigma}_{j}^{\left[  \mathbf{k}\right]  \left(  q\right)  }\left(
r_{j}\right)  =e^{i\frac{2\pi}{q}r_{j}}\Sigma_{j}^{\left[  \mathbf{k}\right]
},\ \ \ j=0,1,2,3,\ \ \ r_{j}=0,1,2,\ldots q-1,\ \ \ q\in\mathsf{Q}%
_{12},\ \ \ k=1,\ldots,n-1. \label{skj}%
\end{equation}

\end{definition}

\begin{assertion}
\label{asser-0}The set $\left\{  \hat{\Sigma}_{j}^{\left[  \mathbf{k}\right]
\left(  q\right)  }\left(  r\right)  \right\}  \cup\left\{  \mathbf{0}%
\right\}  $, where $\mathbf{0}$ the $4\times4$ matrix with zero entries, is
closed with respect to $n$-ary multiplication $\mu^{\left[  \mathbf{n}\right]
}$ (\ref{mnm}).
\end{assertion}

\begin{proof}
This follows from (\ref{sk}) that non-zero $n$-ary products of $\Sigma
$-matrices are closed, as it is for the products of $n$ $q$-generalized
extended $\sigma$-matrices (\ref{sq}). Other products are equal to
$\mathbf{0}$.
\end{proof}

\begin{proposition}
The set of the $q$-generalized phase-shifted $n$-ary elementary $\Sigma
$-matrices forms $12$ non-isomorphic finite $n$-ary semigroups with zero, of
order $4q\left(  n-1\right)  +1$, $q\in\mathsf{Q}_{12}$.
\end{proposition}

\begin{proof}
It follows from \textbf{Assertion} \ref{asser-0}, and total polyadic
associativity follows from the fact that the multiplication of ordinary
matrices is associative.
\end{proof}

The formulas for $n$-ary products of $q$-generalized of $\Sigma$-matrices can
be written in concrete cases.

\begin{example}
[\textsf{Semigroup of ternary} $\Sigma$\textsf{-matrices}]The multiplication
of $q$-generalized phase-shifted ternary elementary $\Sigma$-matrices can be
obtained from (\ref{ct1})--(\ref{ct2}) by multiplying both sides by three
factors $e^{i\frac{2\pi}{q}r_{k}}e^{i\frac{2\pi}{q}r_{l}}e^{i\frac{2\pi}%
{q}r_{m}}$ and use the \textquotedblleft phase presentation\textquotedblright%
\ of the Levi-Civita symbol (\ref{ek}). In this way we get the $q$-generalized
ternary products%
\begin{align}
&  \mu_{q}^{\left[  \mathbf{3}\right]  }\left[  \hat{\Sigma}_{k}^{\left[
\mathbf{1}\right]  \left(  q\right)  }\left(  r_{k}\right)  ,\hat{\Sigma}%
_{l}^{\left[  \mathbf{2}\right]  \left(  q\right)  }\left(  r_{l}\right)
,\hat{\Sigma}_{m}^{\left[  \mathbf{1}\right]  \left(  q\right)  }\left(
r_{m}\right)  \right]  =\delta_{kl}\hat{\Sigma}_{m}^{\left[  \mathbf{1}%
\right]  \left(  q\right)  }\left(  r_{k}\dotplus_{q}r_{l}\dotplus_{q}%
r_{m}\right)  +\delta_{lm}\hat{\Sigma}_{k}^{\left[  \mathbf{1}\right]  \left(
q\right)  }\left(  r_{k}\dotplus_{q}r_{l}\dotplus_{q}r_{m}\right)  \nonumber\\
&  +\delta_{km}\hat{\Sigma}_{l}^{\left[  \mathbf{1}\right]  \left(  q\right)
}\left(  r_{k}\dotplus_{q}r_{l}\dotplus_{q}r_{m}\dotplus_{q}\frac{q}%
{2}\right)  +\left\vert \epsilon_{klm}\right\vert \hat{\Sigma}_{0}^{\left[
\mathbf{1}\right]  \left(  q\right)  }\left(  r_{k}\dotplus_{q}r_{l}%
\dotplus_{q}r_{m}\dotplus_{q}\frac{q}{4}\dotplus_{q}\frac{q}{4}\left(
1-\epsilon_{klm}\right)  \right)  ,\label{m31}%
\end{align}%
\begin{align}
&  \mu_{q}^{\left[  \mathbf{3}\right]  }\left[  \hat{\Sigma}_{k}^{\left[
\mathbf{2}\right]  \left(  q\right)  }\left(  r_{k}\right)  ,\hat{\Sigma}%
_{l}^{\left[  \mathbf{1}\right]  \left(  q\right)  }\left(  r_{l}\right)
,\hat{\Sigma}_{m}^{\left[  \mathbf{2}\right]  \left(  q\right)  }\left(
r_{m}\right)  \right]  =\delta_{kl}\hat{\Sigma}_{m}^{\left[  \mathbf{2}%
\right]  \left(  q\right)  }\left(  r_{k}\dotplus_{q}r_{l}\dotplus_{q}%
r_{m}\right)  +\delta_{lm}\hat{\Sigma}_{k}^{\left[  \mathbf{2}\right]  \left(
q\right)  }\left(  r_{k}\dotplus_{q}r_{l}\dotplus_{q}r_{m}\right)  \nonumber\\
&  +\delta_{km}\hat{\Sigma}_{l}^{\left[  \mathbf{2}\right]  \left(  q\right)
}\left(  r_{k}\dotplus_{q}r_{l}\dotplus_{q}r_{m}\dotplus_{q}\frac{q}%
{2}\right)  +\left\vert \epsilon_{klm}\right\vert \hat{\Sigma}_{0}^{\left[
\mathbf{2}\right]  \left(  q\right)  }\left(  r_{k}\dotplus_{q}r_{l}%
\dotplus_{q}r_{m}\dotplus_{q}\frac{q}{4}\dotplus_{q}\frac{q}{4}\left(
1-\epsilon_{klm}\right)  \right)  ,\label{m32}%
\end{align}
while other ones are zero $\mathbf{0}$. The ternary total associativity
follows from the ordinary associativity of the matrix product (\ref{mnm}).
Therefore, the set of $q$-generalized phase-shifted ternary elementary
$\Sigma$-matrices form a finite ternary semigroup of order $8q+1$,
$q\in\mathsf{Q}_{12}$ with the multiplication (\ref{m31})--(\ref{m32})%
\begin{equation}
\hat{S}_{\Sigma}^{\left(  q\right)  }=\left\langle \left\{  \hat{\Sigma}%
_{j}^{\left[  \mathbf{k}\right]  \left(  q\right)  }\left(  r\right)
\right\}  \cup\left\{  \mathbf{0}\right\}  \mid\mu_{q}^{\left[  \mathbf{3}%
\right]  }\right\rangle .
\end{equation}

Even in the first case $q=4$ the semigroup of ternary elementary $\Sigma
$-matrices is of $33$th order. Nowadays only binary semigroups up to $10$th
order are fully listed \cite{slo/plo,dis/jef/kel/kot}.
\end{example}

\subsection{The $n$-ary group of full $\mathit{\Sigma}$-matrices}

Let us consider the set of full $\mathit{\Sigma}$-matrices (\ref{sj}), which
is not closed under $n$-ary multiplication $\mu^{\left[  \mathbf{n}\right]  }$
(\ref{mnm}): the r.h.s. of their product contains the same multipliers $\pm1$,
$\pm i$ (see (\ref{skl})--(\ref{skl3}) for ternary case).Therefore, we should
enlarge this set, as for the $\sigma$-matrices (\ref{sq}).

\begin{definition}
The enlarged set of the $q$-generalized phase-shifted full $\mathit{\Sigma}%
$-matrices consists of $4q$ elements (see (\ref{is}), (\ref{q}) and
(\ref{sq}))%
\begin{equation}
\mathit{\hat{\Sigma}}_{j}^{\left(  q\right)  }\left(  r_{j}\right)
=e^{i\frac{2\pi}{q}r_{j}}\mathit{\Sigma}_{j},\ \ \ j=0,1,2,3,\ \ \ r_{j}%
=0,1,2,\ldots q-1,\ \ \ q\in\mathsf{Q}_{12}. \label{ski}%
\end{equation}

\end{definition}

\begin{assertion}
\label{asser-1}The set $\left\{  \mathit{\hat{\Sigma}}_{j}^{\left(  q\right)
}\left(  r\right)  \right\}  $ is closed with respect to $n$-ary
multiplication $\mu^{\left[  \mathbf{n}\right]  }$ (\ref{mnm}).
\end{assertion}

\begin{proof}
It follows from (\ref{sj}) that non-zero $n$-ary products of $\mathit{\Sigma}%
$-matrices are closed, as it is for the products of $n$ $q$-generalized
extended $\sigma$-matrices (\ref{sq}) which form a closed set, see
(\ref{mm1})--(\ref{mmn}).
\end{proof}

To construct an $n$-ary group we have to find the querelement (\ref{mq}%
)--(\ref{mw}) for full $\mathit{\Sigma}$-matrices.

\begin{proposition}
The querelement for the $q$-generalized phase-shifted full $\mathit{\Sigma}%
$-matrix (\ref{ski}) is%
\begin{equation}
\widetilde{\mathit{\Sigma}}_{j}^{\left(  q\right)  }\left(  r_{j}\right)
=e^{i\frac{2\pi}{q}\left(  r_{j}\dotplus_{q}\left(  -n\right)  r_{j}\right)
}\mathit{\Sigma}_{j}, \label{sq1}%
\end{equation}
where%
\begin{equation}
\widetilde{\mathit{\Sigma}}_{j}=\left\{
\begin{array}
[c]{c}%
\mathit{\Sigma}_{0},\ \text{if arity }n\text{ is even,}\\
\mathit{\Sigma}_{j},\ \text{if arity }n\text{ is odd.}%
\end{array}
\right.  \label{sq2}%
\end{equation}

\end{proposition}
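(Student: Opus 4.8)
The plan is to split off the central scalar factor from the matrix factor and treat the two resulting equations separately. Write the phase-shifted full matrix as $\mathit{\hat{\Sigma}}_{j}^{\left(  q\right)  }\left(  r_{j}\right)=e^{i\frac{2\pi}{q}r_{j}}\mathit{\Sigma}_{j}$ as in (\ref{ski}), and posit a querelement of the same shape, $\widetilde{\mathit{\Sigma}}_{j}^{\left(  q\right)  }\left(  r_{j}\right)=e^{i\frac{2\pi}{q}\tilde{r}_{j}}\widetilde{\mathit{\Sigma}}_{j}$, with the matrix part $\widetilde{\mathit{\Sigma}}_{j}$ and the phase index $\tilde{r}_{j}$ both to be determined. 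Substituting both into the defining relation (\ref{mq}) and pulling the scalar prefactors through the $n$-ary product, the single equation (\ref{mq}) decouples into a matrix identity $\left(  \mathit{\Sigma}_{j}\right)^{n-1}\widetilde{\mathit{\Sigma}}_{j}=\mathit{\Sigma}_{j}$ together with a scalar congruence $\left(  n-1\right)  r_{j}\dotplus_{q}\tilde{r}_{j}\equiv r_{j}$.

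For the matrix identity I would exploit that the full $\mathit{\Sigma}$-matrix (\ref{sj}) is the Kronecker product of the $\left(  n-1\right)  \times\left(  n-1\right)  $ cyclic shift $C$, of order $n-1$, with the single block $\sigma_{j}$, so that $\left(  \mathit{\Sigma}_{j}\right)^{p}=C^{p}\otimes\sigma_{j}^{p}$ (consistent with $\mathit{\Sigma}_{0}=C\otimes I_{2}=\mathbf{E}$ from (\ref{ee}) and with the power relations (\ref{sn1})--(\ref{sn2})). Since $C^{n-1}=I_{n-1}$ and $\sigma_{j}^{2}=I_{2}$ by (\ref{in}), the factor $\sigma_{j}^{n-1}$ collapses to $I_{2}$ for odd $n$ and to $\sigma_{j}$ for even $n$. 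Feeding this into $\left(  \mathit{\Sigma}_{j}\right)^{n-1}\widetilde{\mathit{\Sigma}}_{j}=\mathit{\Sigma}_{j}$ and cancelling the self-inverse $\sigma_{j}$ reproduces exactly the parity split (\ref{sq2}): $\widetilde{\mathit{\Sigma}}_{j}=\mathit{\Sigma}_{j}$ for odd $n$, and $\widetilde{\mathit{\Sigma}}_{j}=\mathit{\Sigma}_{0}\equiv\mathbf{E}$ for even $n$. A convenient cross-check is the manifest general querelement (\ref{mw}): each of its nonzero blocks is a product of $n-2$ Hermitian conjugates $\sigma_{j}^{\dagger}=\sigma_{j}$, whose parity reproduces (\ref{sq2}) and whose scalar phases multiply to $e^{-i\frac{2\pi}{q}\left(  n-2\right)  r_{j}}$.

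The placement of the querelement is immaterial here, which is what upgrades this single computation to a genuine querelement in the $n$-ary group of Assertion~\ref{asser-1}: every factor entering (\ref{mq}) carries the same cyclic shift $C$, and the surviving $\sigma$-parts (namely $\sigma_{j}$ and either $I_{2}$ or $\sigma_{j}$) commute, so the product, and hence the defining relation, is insensitive to which of the $n$ slots $\widetilde{\mathit{\Sigma}}_{j}^{\left(  q\right)  }$ occupies.

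I expect the scalar congruence to be the main obstacle, since all the delicacy lies in getting the multiplicity of $r_{j}$ right modulo $q$. Solving $\left(  n-1\right)  r_{j}\dotplus_{q}\tilde{r}_{j}\equiv r_{j}$ fixes $\tilde{r}_{j}\equiv -\left(  n-2\right)  r_{j}\pmod q$, in agreement with the $n-2$ conjugate factors counted from (\ref{mw}), and I would then rewrite this index in the $\dotplus_{q}$ notation so as to reconcile it with the exponent displayed in (\ref{sq1}). The error that is easiest to commit at this point is an off-by-$r_{j}$ slip in balancing the $n-1$ repeated factors against the single querelement, so I would pin the count down with two sanity checks: the binary reduction $n=2$, where the querelement degenerates to the polyadic identity and the index must vanish, and the quaternary case $n=4$, where $\widetilde{\mathit{\Sigma}}_{j}=\mathit{\Sigma}_{0}$ and the three repeated phases must cancel against $\tilde{r}_{j}$ to return the original $r_{j}$.
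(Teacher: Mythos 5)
Your proposal is correct and follows essentially the same route as the paper's one-line proof: decouple the scalar phase from the matrix factor in the defining relation (\ref{mq}) and use the involutivity (\ref{in}) to obtain the parity split (\ref{sq2}). Your Kronecker-product observation $\mathit{\Sigma}_{j}=C\otimes\sigma_{j}$ is a clean way to make explicit what the paper leaves implicit (it also yields (\ref{sn1})--(\ref{sn2}) at a glance), and your remark on the placement-independence of the querelement is a detail the paper omits. The one substantive point concerns the phase: balancing $n-1$ repeated factors against a single querelement gives $\tilde{r}_{j}\equiv-(n-2)\,r_{j}\pmod q$, exactly as you derive, and this does \emph{not} reconcile with the exponent printed in (\ref{sq1}), which reads $r_{j}\dotplus_{q}(-n)r_{j}=(1-n)r_{j}$ and is off by one unit of $r_{j}$. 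Your own sanity checks settle the matter in your favour: for $n=3$ the querelement coincides with the matrix inverse, namely $e^{-i\frac{2\pi}{q}r_{j}}\mathit{\Sigma}_{j}$, i.e.\ phase $-r_{j}$ rather than $-2r_{j}$; and for $n=4$ only $\tilde{r}_{j}=-2r_{j}$ returns $e^{i\frac{2\pi}{q}r_{j}}\mathit{\Sigma}_{j}$. (Note also that the right-hand side of (\ref{sq1}) displays $\mathit{\Sigma}_{j}$ where consistency with (\ref{sq2}) requires $\widetilde{\mathit{\Sigma}}_{j}$.) So do not bend your index to match the printed form of (\ref{sq1}): your exponent $-(n-2)r_{j}$ is the correct one, and (\ref{sq1}) appears to contain an off-by-one slip.
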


\begin{proof}
We use the equation (\ref{mq}) for phase factors and the involutory (\ref{in})
to get (\ref{sq2}).
\end{proof}

\begin{proposition}
The set of the $q$-generalized phase-shifted $n$-ary full $\mathit{\Sigma}%
$-matrices forms $12$ non-isomorphic finite $n$-ary groups of order $4q$,
$q\in\mathsf{Q}_{12}$.
\end{proposition}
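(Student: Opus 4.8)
The plan is to verify directly the three defining axioms of an $n$-ary (polyadic) group for the set $\left\{  \mathit{\hat{\Sigma}}_{j}^{\left(  q\right)  }\left(  r_{j}\right)  \right\}  $ equipped with $\mu^{\left[  \mathbf{n}\right]  }$ from (\ref{mnm}), and then to carry out the counting and the isomorphism classification. First I would record closure, which is exactly \textbf{Assertion} \ref{asser-1}: every $n$-ary product of phase-shifted full $\mathit{\Sigma}$-matrices is again one of them. Total polyadic associativity is immediate, since $\mu^{\left[  \mathbf{n}\right]  }$ is nothing but the iterated ordinary product of $2\left(  n-1\right)  \times2\left(  n-1\right)  $ matrices and ordinary matrix multiplication is associative, so all bracketings agree. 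The existence of querelements inside the set is supplied by the preceding proposition, formulas (\ref{sq1})--(\ref{sq2}): the querelement of $\mathit{\hat{\Sigma}}_{j}^{\left(  q\right)  }\left(  r_{j}\right)  $ is again a phase-shifted full $\mathit{\Sigma}$-matrix (with lower index $0$ or $j$ according to the parity of $n$). By the D\"ornte characterization, an $n$-ary semigroup in which every element admits a querelement is an $n$-ary group, so these three facts already yield the group structure.

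The conceptual point I would stress is the contrast with the elementary case: there the set had to be completed with the zero matrix $\mathbf{0}$ and only a semigroup resulted, because products of elementary $\Sigma$-matrices sitting in non-matching blocks vanish. For the full $\mathit{\Sigma}$-matrices no such collapse occurs: each $\mathit{\Sigma}_{j}$ lies in $SU^{\left[  \mathbf{n}\right]  }\left(  2\right)  $ and is invertible, so all $n$-ary products remain invertible and stay inside $\left\{  \mathit{\hat{\Sigma}}_{j}^{\left(  q\right)  }\left(  r_{j}\right)  \right\}  $ without adjoining $\mathbf{0}$. This invertibility is precisely what upgrades the semigroup to a genuine $n$-ary group.

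Next I would count. For a fixed $q\in\mathsf{Q}_{12}$ the elements $\mathit{\hat{\Sigma}}_{j}^{\left(  q\right)  }\left(  r_{j}\right)  =e^{i\frac{2\pi}{q}r_{j}}\mathit{\Sigma}_{j}$ are parametrized by $j\in\left\{  0,1,2,3\right\}  $ and $r_{j}\in\left\{  0,1,\ldots,q-1\right\}  $, giving at most $4q$ elements. To show the order is exactly $4q$ I would check pairwise distinctness: the blocks $\sigma_{0},\sigma_{1},\sigma_{2},\sigma_{3}$ are linearly independent, so different $j$ give different matrices, while for fixed $j$ the factors $e^{i\frac{2\pi}{q}r_{j}}$ are distinct $q$-th roots of unity, so different $r_{j}$ give different matrices. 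Since $\mathsf{Q}_{12}$ from (\ref{q}) has exactly twelve entries, this produces twelve finite $n$-ary groups of respective orders $4q$.

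Finally, non-isomorphism is the cheapest step: the twelve admissible values of $q$ are pairwise distinct, hence so are the orders $4q$, and $n$-ary groups of different cardinality cannot be isomorphic. The only places that need genuine care---and thus the mild ``hard part''---are confirming that no accidental coincidences reduce the count below $4q$ (the distinctness argument above) and that the querelement truly lands in the set rather than forcing an enlargement; both follow from the linear independence of the $\sigma$-matrices together with the cyclic-group structure of the phase factors. Everything else is bookkeeping once the axioms are in place.
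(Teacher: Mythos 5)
Your proof is correct and follows essentially the same route as the paper: closure via Assertion \ref{asser-1}, total polyadic associativity inherited from ordinary matrix multiplication, and querelements supplied by (\ref{sq1})--(\ref{sq2}). You additionally spell out the order count ($4q$ pairwise distinct elements) and the non-isomorphism claim (distinct orders), which the paper's one-line proof leaves implicit; these are welcome completions rather than a different approach.
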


\begin{proof}
It follows from \textbf{Assertion} \ref{asser-1}, with the total polyadic
associativity following from the fact that the multiplication of ordinary
matrices is associative, and the querelements are given in (\ref{sq1}%
)--(\ref{sq2}).
\end{proof}

The formulas of $n$-ary multiplication for $q$-generalized full
$\mathit{\Sigma}$-matrices are cumbersome, but for some concrete cases can be
written manifestly.

\begin{example}
[\textsf{Ternary group of full} $\mathit{\Sigma}$\textsf{-matrices}]We derive
the multiplication of $q$-generalized phase-shifted ternary full
$\mathit{\Sigma}$-matrices from (\ref{skl})--(\ref{skl3}) by multiplying both
sides by three factors $e^{i\frac{2\pi}{q}r_{k}}e^{i\frac{2\pi}{q}r_{l}%
}e^{i\frac{2\pi}{q}r_{m}}$ and use (\ref{ek}). Thus, we get the $q$%
-generalized ternary products%
\begin{align}
&  \mu_{q}^{\left[  \mathbf{3}\right]  }\left[  \mathit{\hat{\Sigma}}%
_{k}^{\left(  q\right)  }\left(  r_{k}\right)  ,\mathit{\hat{\Sigma}}%
_{l}^{\left(  q\right)  }\left(  r_{l}\right)  ,\mathit{\hat{\Sigma}}%
_{m}^{\left(  q\right)  }\left(  r_{m}\right)  \right]  =\delta_{kl}%
\mathit{\hat{\Sigma}}_{m}^{\left(  q\right)  }\left(  r_{k}\dotplus_{q}%
r_{l}\dotplus_{q}r_{m}\right)  +\delta_{lm}\mathit{\hat{\Sigma}}_{k}^{\left(
q\right)  }\left(  r_{k}\dotplus_{q}r_{l}\dotplus_{q}r_{m}\right) \nonumber\\
&  +\delta_{km}\mathit{\hat{\Sigma}}_{l}^{\left(  q\right)  }\left(
r_{k}\dotplus_{q}r_{l}\dotplus_{q}r_{m}\dotplus_{q}\frac{q}{2}\right)
+\left\vert \epsilon_{klm}\right\vert \mathit{\hat{\Sigma}}_{0}^{\left(
q\right)  }\left(  r_{k}\dotplus_{q}r_{l}\dotplus_{q}r_{m}\dotplus_{q}\frac
{q}{4}\dotplus_{q}\frac{q}{4}\left(  1-\epsilon_{klm}\right)  \right)  .
\label{m3q}%
\end{align}
The querelement now satisfies ternary involutivity, which follows from the
binary version (\ref{in}) and (\ref{sq1})%
\begin{equation}
\widetilde{\mathit{\Sigma}}_{j}^{\left(  q\right)  }=\mathit{\Sigma}_{j}.
\end{equation}
The ternary total associativity of the multiplication (\ref{m3q}) follows from
the ordinary associativity of the matrix product (\ref{mnm}). Therefore, the
set of $q$-generalized phase-shifted ternary full $\mathit{\Sigma}$-matrices
form a finite ternary group of order $4q$%
\begin{equation}
\hat{G}_{\mathit{\Sigma}}^{\left[  \mathbf{3}\right]  \left(  q\right)
}=\left\langle \left\{  \mathit{\hat{\Sigma}}_{j}^{\left(  q\right)  }\left(
r\right)  \right\}  \mid\mu_{q}^{\left[  \mathbf{3}\right]  },\widetilde
{\left(  \_\right)  }\right\rangle ,\ \ \ \ q\in\mathsf{Q}_{12}.
\end{equation}

In the minimal case $q=4$, the group of ternary full $\mathit{\Sigma}%
$-matrices $\hat{G}_{\mathit{\Sigma}}^{\left(  q\right)  }$ is of $16$th
order, while the Cayley table contains $16^{3}=4096$ entries.
\end{example}

\section{\textsc{Heterogeneous full }$\Sigma$\textsc{-matrices}}

Let us consider a further extension of the full $\mathit{\Sigma}$-matrices:
each entry in (\ref{sj}) contains a different $\sigma$-matrix.

\begin{definition}
The heterogeneous $n$-ary full $\mathit{\Sigma}^{het}$-matrices are
multi-index cyclic shift matrices of the following form%
\begin{equation}
\mathit{\Sigma}^{het}=\mathit{\Sigma}_{j_{1}j_{2}\ldots j_{n-1}}^{het}=\left(
\begin{array}
[c]{ccccccc}%
\mathsf{0} & \sigma_{j_{1}} & \mathsf{0} & \ldots & \mathsf{0} & \ldots &
\mathsf{0}\\
\mathsf{0} & \mathsf{0} & \sigma_{j_{2}} & \ldots & \mathsf{0} & \ldots &
\mathsf{0}\\
\mathsf{0} & \mathsf{0} & \mathsf{0} & \ddots & \vdots & \ldots & \vdots\\
\mathsf{0} & \mathsf{0} & \mathsf{0} & \ldots & \sigma_{j_{k}} & \ldots &
\mathsf{0}\\
\vdots & \vdots & \vdots & \vdots & \vdots & \ddots & \vdots\\
\mathsf{0} & \mathsf{0} & \mathsf{0} & \ldots & \mathsf{0} & \ldots &
\sigma_{j_{n-2}}\\
\sigma_{j_{n-1}} & \mathsf{0} & \mathsf{0} & \ldots & \mathsf{0} & \ldots &
\mathsf{0}%
\end{array}
\right)  ,\ \ j_{1},j_{2},\ldots,j_{n-1}=0,1,2,3, \label{sjj}%
\end{equation}
and the $4$ full $\mathit{\Sigma}$-matrices (\ref{sj}) can be called homogeneous.
\end{definition}

The heterogeneous full $\mathit{\Sigma}^{het}$-matrices can be presented and
the sum of the elementary $\Sigma$-matrices as was done for the (homogeneous)
full $\mathit{\Sigma}$-matrices (\ref{ssj})%
\begin{equation}
\Sigma_{j_{1}}^{\left(  \mathbf{1}\right)  }+\Sigma_{j_{2}}^{\left(
\mathbf{2}\right)  }+\ldots+\Sigma_{j_{n-1}}^{\left(  \mathbf{n-1}\right)
}=\mathit{\Sigma}_{j_{1}j_{2}\ldots j_{n-1}}^{het},\ \ j=0,1,2,3. \label{sjh}%
\end{equation}

The number of heterogeneous full $\mathit{\Sigma}^{het}$-matrices is $\left(
n-1\right)  ^{4}$, which follows from the formula for arrangements with repetitions.

The exact multiplication can be written in examples for concrete arities,
mostly of the lowest ones.

\begin{example}
[\textsf{Ternary heterogeneous full} $\mathit{\Sigma}$\textsf{-matrices}]In
the lowest ternary ($n=3$) case $\mathit{\Sigma}^{het}$ (\ref{sjj}) has two
indices%
\begin{equation}
\mathit{\Sigma}_{j_{1}j_{2}}^{het}=\left(
\begin{array}
[c]{cc}%
0 & \sigma_{j_{1}}\\
\sigma_{j_{2}} & 0
\end{array}
\right)  ,\ \ j_{1},j_{2}=0,1,2,3. \label{sj3}%
\end{equation}
There are $2^{4}=16$ ternary heterogeneous full $\mathit{\Sigma}^{het}%
$-matrices. The non-derived ternary product (\ref{mnm}) of $\mathit{\Sigma
}^{het}$-matrices now has the form%
\begin{equation}
\mathit{\Sigma}_{j_{1}j_{2}}^{het}=\mu_{het}^{\left[  \mathbf{3}\right]
}\left[  \mathit{\Sigma}_{j_{1}^{\prime}j_{2}^{\prime}}^{het},\mathit{\Sigma
}_{j_{1}^{\prime\prime}j_{2}^{\prime\prime}}^{het},\mathit{\Sigma}%
_{j_{1}^{\prime\prime\prime}j_{2}^{\prime\prime\prime}}^{het}\right]  =\left(
\begin{array}
[c]{cc}%
0 & \sigma_{j_{1}^{\prime}}\sigma_{j_{2}^{\prime\prime}}\sigma_{j_{1}%
^{\prime\prime\prime}}\\
\sigma_{j_{2}^{\prime}}\sigma_{j_{1}^{\prime\prime}}\sigma_{j_{2}%
^{\prime\prime\prime}} & 0
\end{array}
\right)  ,\ j_{1},j_{2},j_{1}^{\prime},j_{2}^{\prime},j_{1}^{\prime\prime
},j_{2}^{\prime\prime},j_{1}^{\prime\prime\prime},j_{2}^{\prime\prime\prime
}=0,1,2,3, \label{sh}%
\end{equation}
and we can use the identities for $\sigma$-matrices (\ref{s1})--(\ref{s2}).
There are different cases, depending on the presence of $0$ among the indices
$j$'s (or identities among $\sigma$'s):

\begin{enumerate}
\item There are no $0$'s among all $j$'s, $j_{1}=k$ and $j_{2}=l$,
$k,l=1,2,3$. Then we use (\ref{s2}) to get%
\begin{align}
&  \mu_{het}^{\left[  \mathbf{3}\right]  }\left[  \mathit{\Sigma}_{k^{\prime
}l^{\prime}}^{het},\mathit{\Sigma}_{k^{\prime\prime}l^{\prime\prime}}%
^{het},\mathit{\Sigma}_{k^{\prime\prime\prime}l^{\prime\prime\prime}}%
^{het}\right] \nonumber\\
&  =\left(
\begin{array}
[c]{cc}%
0 & \delta_{k^{\prime}l^{\prime\prime}}\sigma_{k^{\prime\prime\prime}}%
-\delta_{k^{\prime}k^{\prime\prime\prime}}\sigma_{l^{\prime\prime}}%
+\delta_{l^{\prime\prime}k^{\prime\prime\prime}}\sigma_{k^{\prime}}%
+i\epsilon_{k^{\prime}l^{\prime\prime}k^{\prime\prime\prime}}\sigma_{0}\\
\delta_{l^{\prime}k^{\prime\prime}}\sigma_{l^{\prime\prime\prime}}%
-\delta_{l^{\prime}l^{\prime\prime\prime}}\sigma_{k^{\prime\prime}}%
+\delta_{k^{\prime\prime}l^{\prime\prime\prime}}\sigma_{l^{\prime}}%
+i\epsilon_{l^{\prime}k^{\prime\prime}l^{\prime\prime\prime}}\sigma_{0} & 0
\end{array}
\right)  . \label{m3a}%
\end{align}

\item There is one $0$ among $j_{1}^{\prime},j_{2}^{\prime\prime}%
,j_{1}^{\prime\prime\prime}$, it is not important which one, and thus we get
(e.g. for $j_{1}^{\prime\prime\prime}=0$)%
\begin{align}
&  \mu_{het}^{\left[  \mathbf{3}\right]  }\left[  \mathit{\Sigma}_{k^{\prime
}l^{\prime}}^{het},\mathit{\Sigma}_{k^{\prime\prime}l^{\prime\prime}}%
^{het},\mathit{\Sigma}_{0l^{\prime\prime\prime}}^{het}\right] \nonumber\\
&  =\left(
\begin{array}
[c]{cc}%
0 & \delta_{k^{\prime}l^{\prime\prime}}\sigma_{0}+i\epsilon_{k^{\prime
}l^{\prime\prime}m}\sigma_{m}\\
\delta_{l^{\prime}k^{\prime\prime}}\sigma_{l^{\prime\prime\prime}}%
-\delta_{l^{\prime}l^{\prime\prime\prime}}\sigma_{k^{\prime\prime}}%
+\delta_{k^{\prime\prime}l^{\prime\prime\prime}}\sigma_{l^{\prime}}%
+i\epsilon_{l^{\prime}k^{\prime\prime}l^{\prime\prime\prime}}\sigma_{0} & 0
\end{array}
\right)  ,\ m=1,2,3. \label{m3b}%
\end{align}
In the same way, if one $0$ is among $j_{2}^{\prime},j_{1}^{\prime\prime
},j_{2}^{\prime\prime\prime}$, then (e.g. for $j_{2}^{\prime\prime\prime}=0$)%
\begin{align}
&  \mu_{het}^{\left[  \mathbf{3}\right]  }\left[  \mathit{\Sigma}_{k^{\prime
}l^{\prime}}^{het},\mathit{\Sigma}_{k^{\prime\prime}l^{\prime\prime}}%
^{het},\mathit{\Sigma}_{k^{\prime\prime\prime}0}^{het}\right] \nonumber\\
&  =\left(
\begin{array}
[c]{cc}%
0 & \delta_{k^{\prime}l^{\prime\prime}}\sigma_{k^{\prime\prime\prime}}%
-\delta_{k^{\prime}k^{\prime\prime\prime}}\sigma_{l^{\prime\prime}}%
+\delta_{l^{\prime\prime}k^{\prime\prime\prime}}\sigma_{k^{\prime}}%
+i\epsilon_{k^{\prime}l^{\prime\prime}k^{\prime\prime\prime}}\sigma_{0}\\
\delta_{l^{\prime}k^{\prime\prime}}\sigma_{0}+i\epsilon_{l^{\prime}%
k^{\prime\prime}m}\sigma_{m} & 0
\end{array}
\right)  ,\ m=1,2,3. \label{m3c}%
\end{align}

\item There are two $0$'s in different entries of (\ref{sh}), e.g.
$j_{1}^{\prime\prime\prime}=0$ and $j_{2}^{\prime\prime\prime}=0$, then%
\begin{align}
&  \mu_{het}^{\left[  \mathbf{3}\right]  }\left[  \mathit{\Sigma}_{k^{\prime
}l^{\prime}}^{het},\mathit{\Sigma}_{k^{\prime\prime}l^{\prime\prime}}%
^{het},\mathit{\Sigma}_{00}^{het}\right] \nonumber\\
&  =\left(
\begin{array}
[c]{cc}%
0 & \delta_{k^{\prime}l^{\prime\prime}}\sigma_{0}+i\epsilon_{k^{\prime
}l^{\prime\prime}m}\sigma_{m}\\
\delta_{l^{\prime}k^{\prime\prime}}\sigma_{0}+i\epsilon_{l^{\prime}%
k^{\prime\prime}m}\sigma_{m} & 0
\end{array}
\right)  ,\ m=1,2,3. \label{m3d}%
\end{align}

\end{enumerate}
\end{example}

Note that each non-zero matrix entry in (\ref{m3a})--(\ref{m3d}) is
proportional to one $\sigma$-matrix multiplied by $\pm1$, $\pm i$, as in
(\ref{s1})--(\ref{s2}). However, these multipliers can be different for
different entries, as opposed to the case of homogeneous $\mathit{\Sigma}%
$-matrices, for which these multipliers coincide, and can be placed as common
coefficients before matrices (see (\ref{skl})--(\ref{skl3})). Therefore, the
heterogeneous full $\mathit{\Sigma}^{het}$-matrices of the form (\ref{sjj})
are not closed with respect to the $n$-ary multiplication (\ref{mnm}), as the
four ordinary $\sigma$-matrices (\ref{s1}). Nevertheless, the enlarged (in a
special way) phase-shifted heterogeneous full $\mathit{\Sigma}^{het}$-matrices
(analogous to (\ref{sq})) can be closed under $n$-ary multiplication.

\begin{definition}
The enlarged set of the $q$-generalized element-wise phase-shifted
heterogeneous full $\mathit{\Sigma}^{het}$-matrices have the form%
\begin{align}
\mathit{\Sigma}^{het\left(  q\right)  }  &  =\mathit{\Sigma}_{j_{1}j_{2}\ldots
j_{n-1}}^{het\left(  q\right)  }\left(  r_{j_{1}},r_{j_{2}},\ldots,r_{j_{n-1}%
}\right) \nonumber\\
&  =\left(
\begin{array}
[c]{ccccccc}%
\mathsf{0} & e^{i\frac{2\pi}{q}r_{j_{1}}}\sigma_{j_{1}} & \mathsf{0} & \ldots
& \mathsf{0} & \ldots & \mathsf{0}\\
\mathsf{0} & \mathsf{0} & e^{i\frac{2\pi}{q}r_{j_{2}}}\sigma_{j_{2}} & \ldots
& \mathsf{0} & \ldots & \mathsf{0}\\
\mathsf{0} & \mathsf{0} & \mathsf{0} & \ddots & \vdots & \ldots & \vdots\\
\mathsf{0} & \mathsf{0} & \mathsf{0} & \ldots & e^{i\frac{2\pi}{q}r_{j_{k}}%
}\sigma_{j_{k}} & \ldots & \mathsf{0}\\
\vdots & \vdots & \vdots & \vdots & \vdots & \ddots & \vdots\\
\mathsf{0} & \mathsf{0} & \mathsf{0} & \ldots & \mathsf{0} & \ldots &
e^{i\frac{2\pi}{q}r_{j_{n-2}}}\sigma_{j_{n-2}}\\
e^{i\frac{2\pi}{q}r_{j_{n-1}}}\sigma_{j_{n-1}} & \mathsf{0} & \mathsf{0} &
\ldots & \mathsf{0} & \ldots & \mathsf{0}%
\end{array}
\right)  ,\label{shq}\\
j_{k}  &  =0,1,2,3,\ \ \ \ \ \ r_{j_{k}}=0,1,2,\ldots q-1,\ \ \ \ \ \ q\in
\mathsf{Q}_{12},\nonumber
\end{align}
and this set consists of $\left(  4q\left(  n-1\right)  \right)  ^{4}$
elements (see (\ref{is}), (\ref{q}) and (\ref{sq})).
\end{definition}

\begin{assertion}
\label{asser-hq}The set $\left\{  \mathit{\Sigma}_{j_{1}j_{2}\ldots j_{n-1}%
}^{het\left(  q\right)  }\left(  r_{j_{1}},r_{j_{2}},\ldots,r_{j_{n-1}%
}\right)  \right\}  $ is closed with respect to the non-derived $n$-ary
multiplication $\mu^{\left[  \mathbf{n}\right]  }$ (\ref{mnm}).
\end{assertion}

\begin{proof}
This follows from the fact that the products of $q$-generalized extended
$\sigma$-matrices (\ref{sq}) form a closed set, see (\ref{mm1})--(\ref{mmn})
for general cyclic shift matrices.
\end{proof}

\begin{proposition}
The set of the $q$-generalized element-wise phase-shifted heterogeneous full
$\mathit{\Sigma}^{het}$-matrices (\ref{shq}) is a finite $n$-ary group of
order $\left(  4q\left(  n-1\right)  \right)  ^{4}$%
\begin{equation}
\hat{G}_{\mathit{\Sigma}}^{het\left[  \mathbf{n}\right]  \left(  q\right)
}=\left\langle \left\{  \mathit{\Sigma}_{j_{1}j_{2}\ldots j_{n-1}}^{het\left(
q\right)  }\left(  r_{j_{1}},r_{j_{2}},\ldots,r_{j_{n-1}}\right)  \right\}
\mid\mu_{het\left(  q\right)  }^{\left[  \mathbf{n}\right]  },\widetilde
{\left(  \_\right)  }\right\rangle ,\ \ \ \ q\in\mathsf{Q}_{12}. \label{ghq}%
\end{equation}

\end{proposition}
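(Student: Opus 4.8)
The plan is to verify the three defining properties of an $n$-ary group for the set (\ref{shq}): closure under $\mu^{\left[\mathbf{n}\right]}$, total polyadic associativity, and the existence of a querelement for every element. Closure is already furnished by \textbf{Assertion}~\ref{asser-hq}. Total associativity is immediate, since $\mu^{\left[\mathbf{n}\right]}$ is realized by the ordinary product of the $2\left(n-1\right)\times2\left(n-1\right)$ matrices (\ref{mnm}), which is associative; hence the set together with $\mu^{\left[\mathbf{n}\right]}$ is already a finite $n$-ary semigroup, and the order $\left(4q\left(n-1\right)\right)^{4}$ follows by counting the admissible phase-shifted $\sigma$-blocks in (\ref{shq}). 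Thus the whole weight of the proof rests on constructing the querelement and showing that it stays inside the set.

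First I would write down the querelement from the general formula (\ref{mw}), adapted to (\ref{shq}). Because each nonzero block of $\mathit{\Sigma}^{het\left(q\right)}$ is a unitary matrix $e^{i\frac{2\pi}{q}r_{j_{k}}}\sigma_{j_{k}}$ whose inverse is $e^{-i\frac{2\pi}{q}r_{j_{k}}}\sigma_{j_{k}}$ by the involutivity (\ref{in}), the construction (\ref{mw}) produces a querelement whose blocks are products of such phase-shifted $\sigma$-matrices, arranged in the cyclic shift pattern of (\ref{mn}). Reading the defining equation (\ref{mq}) block-by-block through the cycled products (\ref{mm1})--(\ref{mmn}) then reduces the problem to one scalar phase equation per block together with a $\sigma$-index matching condition.

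The hard part will be showing that this querelement is again of the admissible form (\ref{shq}), i.e. that each of its blocks is a single $\sigma$-matrix times a phase drawn from $C_{q}$. This is exactly where the closure of the Pauli group enters: by iterating (\ref{s1})--(\ref{s2}), any product of $\sigma$-matrices equals $c\,\sigma_{m}$ for some single index $m$ and some scalar $c\in\left\{\pm1,\pm i\right\}=\left\langle i\right\rangle_{4}$. Since every admissible $q$ is a multiple of $4$ (\ref{q}), one has $\left\langle i\right\rangle_{4}\subseteq C_{q}$, so the factor $c$ is absorbable into a $C_{q}$ phase, and combined with the $e^{-i\frac{2\pi}{q}r_{j_{k}}}$ factors it yields a phase still in $C_{q}$. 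Hence each querelement block is of the form $e^{i\frac{2\pi}{q}\tilde{r}}\sigma_{m}$, the querelement lies in (\ref{shq}), and the residual phase equation is solvable (uniquely) because $C_{q}$ is a group. Together with closure and associativity this shows that $\hat{G}_{\mathit{\Sigma}}^{het\left[\mathbf{n}\right]\left(q\right)}$ in (\ref{ghq}) is a finite $n$-ary group, completing the proof.
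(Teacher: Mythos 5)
Your proposal is correct and follows essentially the same route as the paper's own proof: closure via \textbf{Assertion}~\ref{asser-hq}, total associativity from ordinary matrix multiplication, and the querelement obtained from (\ref{mq}), (\ref{mw}) together with the fact that any product of $\sigma$-matrices collapses to a single $\sigma$-matrix times a factor in $\left\langle i\right\rangle _{4}\subseteq C_{q}$. Your write-up merely makes explicit the divisibility argument ($4\mid q$ for all $q\in\mathsf{Q}_{12}$) that the paper leaves implicit in the phrase \textquotedblleft multiplied by the cyclic group factor\textquotedblright.
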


\begin{proof}
The set $\left\{  \mathit{\Sigma}_{j_{1}j_{2}\ldots j_{n-1}}^{het\left(
q\right)  }\left(  r_{j_{1}},r_{j_{2}},\ldots,r_{j_{n-1}}\right)  \right\}  $
is closed by \textbf{Assertion} \ref{asser-hq}. The total polyadic
associativity is governed by the associativity of ordinary matrix
multiplication. Each element has its querelement $\widetilde{\mathit{\Sigma
}^{het\left(  q\right)  }}$ defined by the general formula (\ref{mq}), where
the entries are of the form (\ref{shq}), because of (\ref{mw}) and the fact
that the product of $n$ ordinary $\sigma$-matrices is proportional to one
$\sigma$-matrix multiplied by the cyclic group factor.
\end{proof}

\begin{example}
[Ternary group of element-wise phase-shifted heterogeneous full
$\mathit{\Sigma}^{het}$-matrices]In the ternary $n=3$ case, for
$\mathit{\Sigma}^{het\left(  q\right)  }$ (\ref{shq}) we have%
\begin{equation}
\mathit{\Sigma}_{j_{1}j_{2}}^{het\left(  q\right)  }\left(  r_{j_{1}}%
,r_{j_{2}}\right)  =\left(
\begin{array}
[c]{cc}%
0 & e^{i\frac{2\pi}{q}r_{j_{1}}}\sigma_{j_{1}}\\
e^{i\frac{2\pi}{q}r_{j_{2}}}\sigma_{j_{2}} & 0
\end{array}
\right)  \ \ j_{1,2}=0,1,2,3,\ r_{j_{k}}=0,1,2,\ldots q-1,\ q\in
\mathsf{Q}_{12}. \label{shj}%
\end{equation}

The order of the ternary group $\hat{G}_{\mathit{\Sigma}}^{het\left[
\mathbf{3}\right]  \left(  q\right)  }$ is $4096q^{4}$, which for minimal case
$q=4$ becomes $1048\,576$. The ternary multiplication can be obtained from
(\ref{sh})--(\ref{m3d}) by adding the phase factors%
\begin{align}
&  \mathit{\Sigma}_{j_{1}j_{2}}^{het\left(  q\right)  }\left(  r_{j_{1}%
},r_{j_{2}}\right)  =\mu_{het\left(  q\right)  }^{\left[  \mathbf{3}\right]
}\left[  \mathit{\Sigma}_{j_{1}^{\prime}j_{2}^{\prime}}^{het}\left(
r_{j_{1}^{\prime}},r_{j_{2}^{\prime}}\right)  ,\mathit{\Sigma}_{j_{1}%
^{\prime\prime}j_{2}^{\prime\prime}}^{het}\left(  r_{j_{1}^{\prime\prime}%
},r_{j_{2}^{\prime\prime}}\right)  ,\mathit{\Sigma}_{j_{1}^{\prime\prime
\prime}j_{2}^{\prime\prime\prime}}^{het}\left(  r_{j_{1}^{\prime\prime\prime}%
},r_{j_{2}^{\prime\prime\prime}}\right)  \right] \nonumber\\
&  =\left(
\begin{array}
[c]{cc}%
0 & e^{i\frac{2\pi}{q}\left(  r_{j_{1}^{\prime}}+r_{j_{2}^{\prime\prime}%
}+r_{j_{1}^{\prime\prime\prime}}\right)  }\sigma_{j_{1}^{\prime}}\sigma
_{j_{2}^{\prime\prime}}\sigma_{j_{1}^{\prime\prime\prime}}\\
e^{i\frac{2\pi}{q}\left(  r_{j_{2}^{\prime}}+r_{j_{1}^{\prime\prime}}%
+r_{j_{2}^{\prime\prime\prime}}\right)  }\sigma_{j_{2}^{\prime}}\sigma
_{j_{1}^{\prime\prime}}\sigma_{j_{2}^{\prime\prime\prime}} & 0
\end{array}
\right)  ,\label{shqq}\\
&  j_{1},j_{2},j_{1}^{\prime},j_{2}^{\prime},j_{1}^{\prime\prime}%
,j_{2}^{\prime\prime},j_{1}^{\prime\prime\prime},j_{2}^{\prime\prime\prime
}=0,1,2,3,\ \ r_{j_{1,2}^{\prime}},r_{j_{1,2}^{\prime\prime}},r_{j_{1,2}%
^{\prime\prime\prime}}=0,1,2,\ldots q-1,\ q\in\mathsf{Q}_{12}.\nonumber
\end{align}
We consider $3$ cases as well and use (\ref{s1})--(\ref{s2}) with (\ref{ek})
to get

\begin{enumerate}
\item There are no $0$'s among all $j$'s, $j_{1}=k$ and $j_{2}=l$,
$k,l=1,2,3$. Then we use (\ref{s2}) to get

{\tiny
\begin{align}
&  \mu_{het\left(  q\right)  }^{\left[  \mathbf{3}\right]  }\left[
\mathit{\Sigma}_{k^{\prime}l^{\prime}}^{het\left(  q\right)  }\left(
r_{k^{\prime}},r_{l^{\prime}}\right)  ,\mathit{\Sigma}_{k^{\prime\prime
}l^{\prime\prime}}^{het\left(  q\right)  }\left(  r_{k^{\prime\prime}%
},r_{l^{\prime\prime}}\right)  ,\mathit{\Sigma}_{k^{\prime\prime\prime
}l^{\prime\prime\prime}}^{het\left(  q\right)  }\left(  r_{k^{\prime
\prime\prime}},r_{l^{\prime\prime\prime}}\right)  \right]  =\nonumber\\
&  \left(
\begin{array}
[c]{cc}%
0 &
\begin{array}
[c]{c}%
e^{i\frac{2\pi}{q}\left(  r_{k^{\prime}}+r_{l^{\prime\prime}}+r_{k^{\prime
\prime\prime}}\right)  \left(  \delta_{k^{\prime}l^{\prime\prime}}%
\sigma_{k^{\prime\prime\prime}}+\delta_{k^{\prime}k^{\prime\prime\prime}%
}e^{i\pi}\sigma_{l^{\prime\prime}}\right.  }\\
\left.  +\delta_{l^{\prime\prime}k^{\prime\prime\prime}}\sigma_{k^{\prime}%
}+\left\vert \epsilon_{k^{\prime}l^{\prime\prime}k^{\prime\prime\prime}%
}\right\vert e^{i\frac{\pi}{2}(2-\epsilon_{k^{\prime}l^{\prime\prime}%
k^{\prime\prime\prime}})}\sigma_{0}\right)
\end{array}
\\%
\begin{array}
[c]{c}%
e^{i\frac{2\pi}{q}\left(  r_{l^{\prime}}+r_{k^{\prime\prime}}+r_{l^{\prime
\prime\prime}}\right)  }\left(  \delta_{l^{\prime}k^{\prime\prime}}%
\sigma_{l^{\prime\prime\prime}}+\delta_{l^{\prime}l^{\prime\prime\prime}%
}e^{i\pi}\sigma_{k^{\prime\prime}}\right. \\
\left.  +\delta_{k^{\prime\prime}l^{\prime\prime\prime}}\sigma_{l^{\prime}%
}+\left\vert \epsilon_{l^{\prime}k^{\prime\prime}l^{\prime\prime\prime}%
}\right\vert e^{i\frac{\pi}{2}(2-\epsilon_{l^{\prime}k^{\prime\prime}%
l^{\prime\prime\prime}})}\sigma_{0}\right)
\end{array}
& 0
\end{array}
\right)  . \label{m3h1}%
\end{align}
}

\item There is one $0$ among $j_{1}^{\prime},j_{2}^{\prime\prime}%
,j_{1}^{\prime\prime\prime}$, it is not important which one, and thus we get
(e.g. for $j_{1}^{\prime\prime\prime}=0$){\tiny
\begin{align}
&  \mu_{het\left(  q\right)  }^{\left[  \mathbf{3}\right]  }\left[
\mathit{\Sigma}_{k^{\prime}l^{\prime}}^{het\left(  q\right)  }\left(
r_{k^{\prime}},r_{l^{\prime}}\right)  ,\mathit{\Sigma}_{k^{\prime\prime
}l^{\prime\prime}}^{het\left(  q\right)  }\left(  r_{k^{\prime\prime}%
},r_{l^{\prime\prime}}\right)  ,\mathit{\Sigma}_{0l^{\prime\prime\prime}%
}^{het\left(  q\right)  }\left(  r_{0},r_{l^{\prime\prime\prime}}\right)
\right]  =\nonumber\\
&  \left(
\begin{array}
[c]{cc}%
0 & e^{i\frac{2\pi}{q}\left(  r_{k^{\prime}}+r_{l^{\prime\prime}}%
+r_{0}\right)  }\left(  \delta_{k^{\prime}l^{\prime\prime}}\sigma
_{0}+\left\vert \epsilon_{k^{\prime}l^{\prime\prime}m}\right\vert
e^{i\frac{\pi}{2}(2-\epsilon_{k^{\prime}l^{\prime\prime}m})}\sigma_{m}\right)
\\%
\begin{array}
[c]{c}%
e^{i\frac{2\pi}{q}\left(  r_{l^{\prime}}+r_{k^{\prime\prime}}+r_{l^{\prime
\prime\prime}}\right)  \left(  \delta_{l^{\prime}k^{\prime\prime}}%
\sigma_{l^{\prime\prime\prime}}+\delta_{l^{\prime}l^{\prime\prime\prime}%
}e^{i\pi}\sigma_{k^{\prime\prime}}\right.  }\\
\left.  +\delta_{k^{\prime\prime}l^{\prime\prime\prime}}\sigma_{l^{\prime}%
}+\left\vert \epsilon_{l^{\prime}k^{\prime\prime}l^{\prime\prime\prime}%
}\right\vert e^{i\frac{\pi}{2}(2-\epsilon_{l^{\prime}k^{\prime\prime}%
l^{\prime\prime\prime}})}\sigma_{0}\right)
\end{array}
& 0
\end{array}
\right)  . \label{m3h2}%
\end{align}
} In the same way, if there is one $0$ among $j_{2}^{\prime},j_{1}%
^{\prime\prime},j_{2}^{\prime\prime\prime}$, then (e.g. for $j_{2}%
^{\prime\prime\prime}=0$){\tiny
\begin{align}
&  \mu_{het\left(  q\right)  }^{\left[  \mathbf{3}\right]  }\left[
\mathit{\Sigma}_{k^{\prime}l^{\prime}}^{het\left(  q\right)  }\left(
r_{k^{\prime}},r_{l^{\prime}}\right)  ,\mathit{\Sigma}_{k^{\prime\prime
}l^{\prime\prime}}^{het\left(  q\right)  }\left(  r_{k^{\prime\prime}%
},r_{l^{\prime\prime}}\right)  ,\mathit{\Sigma}_{k^{\prime\prime\prime}%
0}^{het\left(  q\right)  }\left(  r_{k^{\prime\prime\prime}},s_{0}\right)
\right]  =\nonumber\\
&  \left(
\begin{array}
[c]{cc}%
0 &
\begin{array}
[c]{c}%
e^{i\frac{2\pi}{q}\left(  r_{k^{\prime}}+r_{l^{\prime\prime}}+r_{k^{\prime
\prime\prime}}\right)  }\left(  \delta_{k^{\prime}l^{\prime\prime}}%
\sigma_{k^{\prime\prime\prime}}+\delta_{k^{\prime}k^{\prime\prime\prime}%
}e^{i\pi}\sigma_{l^{\prime\prime}}\right. \\
\left.  +\delta_{l^{\prime\prime}k^{\prime\prime\prime}}\sigma_{k^{\prime}%
}+\left\vert \epsilon_{k^{\prime}l^{\prime\prime}k^{\prime\prime\prime}%
}\right\vert e^{i\frac{\pi}{2}(1-\epsilon_{k^{\prime}l^{\prime\prime}%
k^{\prime\prime\prime}})}\sigma_{0}\right)
\end{array}
\\
e^{i\frac{2\pi}{q}\left(  r_{l^{\prime}}+r_{k^{\prime\prime}}+s_{0}\right)
}\left(  \delta_{l^{\prime}k^{\prime\prime}}\sigma_{0}+\left\vert
\epsilon_{l^{\prime}k^{\prime\prime}m}\right\vert e^{i\frac{\pi}{2}%
(1-\epsilon_{l^{\prime}k^{\prime\prime}m})}\sigma_{m}\right)  & 0
\end{array}
\right)  . \label{m3h3}%
\end{align}
}

\item There are two $0$'s in different entries of (\ref{sh}), e.g.
$j_{1}^{\prime\prime\prime}=0$ and $j_{2}^{\prime\prime\prime}=0$, then{\tiny
\begin{align}
&  \mu_{het\left(  q\right)  }^{\left[  \mathbf{3}\right]  }\left[
\mathit{\Sigma}_{k^{\prime}l^{\prime}}^{het\left(  q\right)  }\left(
r_{k^{\prime}},r_{l^{\prime}}\right)  ,\mathit{\Sigma}_{k^{\prime\prime
}l^{\prime\prime}}^{het\left(  q\right)  }\left(  r_{k^{\prime\prime}%
},r_{l^{\prime\prime}}\right)  ,\mathit{\Sigma}_{00}^{het\left(  q\right)
}\left(  r_{0},s_{0}\right)  \right]  =\nonumber\\
&  \left(
\begin{array}
[c]{cc}%
0 & e^{i\frac{2\pi}{q}\left(  r_{k^{\prime}}+r_{l^{\prime\prime}}%
+r_{0}\right)  }\left(  \delta_{k^{\prime}l^{\prime\prime}}\sigma
_{0}+\left\vert \epsilon_{k^{\prime}l^{\prime\prime}m}\right\vert
e^{i\frac{\pi}{2}(2-\epsilon_{k^{\prime}l^{\prime\prime}m})}\sigma_{m}\right)
\\
e^{i\frac{2\pi}{q}\left(  r_{l^{\prime}}+r_{k^{\prime\prime}}+s_{0}\right)
}\left(  \delta_{l^{\prime}k^{\prime\prime}}\sigma_{0}+\left\vert
\epsilon_{l^{\prime}k^{\prime\prime}m}\right\vert e^{i\frac{\pi}{2}%
(2-\epsilon_{l^{\prime}k^{\prime\prime}m})}\sigma_{m}\right)  & 0
\end{array}
\right)  ,\label{m3h4}\\
&  r_{0},s_{0}=0,1,2,\ldots q-1,\ m=1,2,3.\nonumber
\end{align}
}
\end{enumerate}

Each non-zero matrix entry in (\ref{m3h1})--(\ref{m3h4}) is equal to one
$\sigma$-matrix multiplied by a phase factor from $C_{q}$, that is, it has the
form (\ref{shj}), and therefore the ternary multiplication $\mu_{het\left(
q\right)  }^{\left[  \mathbf{3}\right]  }$ is closed and non-derived. Since in
the ternary case (only) the querelement (\ref{mq}) coincides with the inverse
matrix (see the note after (\ref{eid})), and $\sigma$-matrices are reflections
(\ref{in}), we have
\begin{equation}
\widetilde{\mathit{\Sigma}}_{j_{1}j_{2}}^{het\left(  q\right)  }\left(
r_{j_{1}},r_{j_{2}}\right)  =\left(
\begin{array}
[c]{cc}%
0 & e^{-i\frac{2\pi}{q}r_{j_{2}}}\sigma_{j_{2}}\\
e^{-i\frac{2\pi}{q}r_{j_{1}}}\sigma_{j_{1}} & 0
\end{array}
\right)  \ \ j_{1,2}=0,1,2,3,\ r_{j_{k}}=0,1,2,\ldots q-1,\ q\in
\mathsf{Q}_{12}.
\end{equation}
The ternary associativity follows from the associativity of ordinary matrix
multiplication. Therefore,%
\begin{equation}
\hat{G}_{\mathit{\Sigma}}^{het\left[  \mathbf{3}\right]  \left(  q\right)
}=\left\langle \left\{  \mathit{\Sigma}_{j_{1}j_{2}}^{het\left(  q\right)
}\left(  r_{j_{1}},r_{j_{2}}\right)  \right\}  \mid\mu_{het\left(  q\right)
}^{\left[  \mathbf{3}\right]  },\widetilde{\left(  \_\right)  }\right\rangle ,
\end{equation}
is a finite ternary group of element-wise phase-shifted heterogeneous full
$\mathit{\Sigma}^{het}$-matrices of order $4096q^{4}$.
\end{example}

\bigskip

\bigskip

\textbf{Acknowledgements}. The author is deeply grateful to Mike Hewitt,
Thomas Nordahl, Vladimir Tkach and Raimund Vogl for useful discussions and
valuable help.

\pagestyle{emptyf}
\mbox{}


\begin{thebibliography}{}

\bibitem[\protect\citeauthoryear{Bagarello, Bavuma, and
  Russo}{\textcolor{blue}{\sc Bagarello et~al.}}{2021}]{bag/bav/rus}
{\textcolor{blue}{\sc Bagarello, F., Y.~Bavuma, and F.~G. Russo}} (2021).
\newblock Topological decompositions of the {P}auli group and their influence
  on dynamical systems.
\newblock {\em Math. Phys. Anal. Geom.\/}~{\bf 24} (2), 1--20.

\bibitem[\protect\citeauthoryear{Ball, Centelles, and
  Huber}{\textcolor{blue}{\sc Ball et~al.}}{2023}]{bal/cen/hub}
{\textcolor{blue}{\sc Ball, S., A.~Centelles, and F.~Huber}} (2023).
\newblock Quantum error-correcting codes and their geometries.
\newblock {\em Ann. Inst. Henri Poincare D, Comb. Phys. Interact.\/}~{\bf 10}
  (2), 337--405.

\bibitem[\protect\citeauthoryear{Besche, Eick, and
  O'Brien}{\textcolor{blue}{\sc Besche et~al.}}{2002}]{bes/eic/bri}
{\textcolor{blue}{\sc Besche, H.~U., B.~Eick, and E.~O'Brien}} (2002).
\newblock A millennium project: constructing small groups.
\newblock {\em Int. J. Algebra Comput.\/}~{\bf 12} (05), 623--644.

\bibitem[\protect\citeauthoryear{Besche, Eick, and
  O’Brien}{\textcolor{blue}{\sc Besche et~al.}}{2001}]{bes/eic/bri1}
{\textcolor{blue}{\sc Besche, H.~U., B.~Eick, and E.~A. O’Brien}} (2001).
\newblock The groups of order at most 2000.
\newblock {\em Electron. Res. Announc. Amer. Math. Soc.\/}~{\bf 7} (1), 1--4.

\bibitem[\protect\citeauthoryear{Conway, Curtis, Norton, Parker, and
  Wilson}{\textcolor{blue}{\sc Conway et~al.}}{1986}]{con/cur/nor}
{\textcolor{blue}{\sc Conway, J.~H., R.~T. Curtis, S.~P. Norton, R.~A. Parker,
  and R.~A. Wilson}} (1986).
\newblock {\em Atlas of Finite Groups}.
\newblock Oxford: Oxford Univ. Press.

\bibitem[\protect\citeauthoryear{Distler, Jefferson, Kelsey, and
  Kotthoff}{\textcolor{blue}{\sc Distler et~al.}}{2012}]{dis/jef/kel/kot}
{\textcolor{blue}{\sc Distler, A., C.~Jefferson, T.~Kelsey, and L.~Kotthoff}}
  (2012).
\newblock The semigroups of order 10.
\newblock In: {\textcolor{blue}{\sc M.~Milano}} (Ed.), {\em Principles and
  Practice of Constraint Programming}, Vol. 7514 of {\em Lecture Notes in
  Computer Science}, Berlin-Heidelberg: Springer, pp.\  883--899.

\bibitem[\protect\citeauthoryear{Djordjevic}{\textcolor{blue}{\sc
  Djordjevic}}{2021}]{djordjevic}
{\textcolor{blue}{\sc Djordjevic, I.}} (2021).
\newblock {\em Quantum Information Processing, Quantum Computing, and Quantum
  Error Correction. {An} Engineering Approach\/} (2nd ed.).
\newblock Amsterdam: Elsevier/Academic Press.

\bibitem[\protect\citeauthoryear{D\"ornte}{\textcolor{blue}{\sc
  D\"ornte}}{1929}]{dor3}
{\textcolor{blue}{\sc D\"ornte, W.}} (1929).
\newblock Unterschungen \"uber einen verallgemeinerten {G}ruppenbegriff.
\newblock {\em Math. Z.\/}~{\bf 29}, 1--19.

\bibitem[\protect\citeauthoryear{Duplij}{\textcolor{blue}{\sc
  Duplij}}{2022}]{duplij2022}
{\textcolor{blue}{\sc Duplij, S.}} (2022).
\newblock {\em Polyadic Algebraic Structures}.
\newblock Bristol: IOP Publishing.

\bibitem[\protect\citeauthoryear{Feynman, Leighton, and
  Sands}{\textcolor{blue}{\sc Feynman et~al.}}{1965}]{fey/lei/san}
{\textcolor{blue}{\sc Feynman, R.~P., R.~B. Leighton, and M.~Sands}} (1965).
\newblock {\em The Feynman Lectures on Physics, Vol. 3: Quantum Mechanics}.
\newblock Reading, MA: Addison-Wesley.

\bibitem[\protect\citeauthoryear{Kibler}{\textcolor{blue}{\sc
  Kibler}}{2009}]{kib2009}
{\textcolor{blue}{\sc Kibler, M.~R.}} (2009).
\newblock An angular momentum approach to quadratic {F}ourier transform,
  {H}adamard matrices, {G}auss sums, mutually unbiased bases, the unitary group
  and the {P}auli group.
\newblock {\em J. Phys. A\/}~{\bf 42} (35), 353001.

\bibitem[\protect\citeauthoryear{Liboff}{\textcolor{blue}{\sc
  Liboff}}{2002}]{liboff}
{\textcolor{blue}{\sc Liboff, R.~L.}} (2002).
\newblock {\em Introductory Quantum Mechanics}.
\newblock Addison-Wesley.

\bibitem[\protect\citeauthoryear{Nielsen and Chuang}{\textcolor{blue}{\sc
  Nielsen and Chuang}}{2000}]{nie/chu}
{\textcolor{blue}{\sc Nielsen, M.~A. and I.~L. Chuang}} (2000).
\newblock {\em Quantum Computation and Quantum Information}.
\newblock Cambridge: Cambridge University Press.

\bibitem[\protect\citeauthoryear{Schiff}{\textcolor{blue}{\sc
  Schiff}}{1968}]{schiff}
{\textcolor{blue}{\sc Schiff, L.~I.}} (1968).
\newblock {\em Quantum Mechanics}.
\newblock New York: McGraw-Hill.

\bibitem[\protect\citeauthoryear{Sloane and Plouffe}{\textcolor{blue}{\sc
  Sloane and Plouffe}}{1995}]{slo/plo}
{\textcolor{blue}{\sc Sloane, N. J.~A. and S.~Plouffe}} (1995).
\newblock {\em The Encyclopedia of Integer Sequences}.
\newblock Academic Press.

\end{thebibliography}
\end{document}